\documentclass[11pt,reqno]{amsart}
\usepackage{amsmath, amsthm, amsfonts, amssymb,color,url,enumitem}

\usepackage{tikz,amssymb,float}
\usetikzlibrary{arrows.meta,calc}
\usepackage{graphicx,paralist}
\makeatletter
\newcommand*\bigcdot{\mathpalette\bigcdot@{.5}}
\newcommand*\bigcdot@[2]{\mathbin{\vcenter{\hbox{\scalebox{#2}{$\m@th#1\bullet$}}}}}
\makeatother

\usepackage{cancel}
\usepackage{wrapfig,hyperref}
\usepackage{mathtools}
\usepackage{fullpage}
\usepackage{subfig}
\usepackage{color}
\usepackage[normalem]{ulem}
\usepackage{tikz}
\usepackage{hyperref}
\hypersetup{
    colorlinks=true,
    linkcolor=blue,
    filecolor=magenta,
    urlcolor=cyan,
}
\theoremstyle{definition}
\newtheorem{definition}{Definition}[section]
\newtheorem{theorem}{Theorem}
\newtheorem{lemma}{Lemma}
\newtheorem{cor}{Corollary}

\newtheorem{remark}{Remark}

\newtheorem{conjecture}{Conjecture}

\newcommand{\Ac}{\mathcal{A}}
\newcommand{\Bc}{\mathcal{B}}

\newcommand{\Ec}{\mathcal{E}}

\newcommand{\Ic}{\mathcal{I}}

\newcommand{\Pc}{\mathcal{P}}

\newcommand{\Rc}{\mathcal{R}}

\newcommand{\Xc}{\mathcal{X}}

\newtheorem{claim}{Claim}
\newtheorem{claimproof}{Proof of Claim}

\usepackage{amssymb,latexsym}
\usepackage{graphicx}
\usepackage{enumerate}
\usepackage{color,amsmath,amssymb,amsthm,fullpage}
\usepackage{url}
\usepackage{comment}
\usepackage{subfiles}

\author[J.~De Silva]{Jessica ~De Silva}
\address{
Department of Mathematics \\
California State University, Stanislaus\\
1 University Cir, Turlock, CA 95382\\
} 
\email{jdesilva1@csustan.edu}

\author[A.~B.~Dionne]{Adam B.~Dionne}
\address{
Department of Mathematics and Statistics \\
Williams College \\
2679 Paresky \\
Williamstown, MA 01267, USA} 
\email{abd2@williams.edu}

\author[A.~Dunkelberg]{Aidan ~Dunkelberg}
\address{
Department of Mathematics and Statistics \\
Williams College \\
33 Stetson Court \\
Williamstown, MA 01267, USA} 
\email{awd4@williams.edu}

\author[P.~E.~Harris]{Pamela E.~Harris}
\address{
Department of Mathematics and Statistics \\
Williams College \\
33 Stetson Court \\
Williamstown, MA 01267, USA} 
\email{peh2@williams.edu}
\thanks{P.~E.~Harris was supported by a Karen Uhlenbeck EDGE Fellowship.}

\title{Very well-covered graphs with the Erd\H{o}s-Ko-Rado property}

\begin{document}
\maketitle

\begin{abstract}
    A family of independent $r$-sets of a graph $G$ is an $r$-star if every set in the family contains some fixed vertex $v$. A graph is $r$-EKR if the maximum size of an intersecting family of independent $r$-sets is the size of an $r$-star. Holroyd and Talbot conjecture that a graph is $r$-EKR as long as $1\leq r\leq\frac{\mu(G)}{2}$, where $\mu(G)$ is the minimum size of a maximal independent set. It is suspected that the smallest counterexample to this conjecture is a well-covered graph. Here we consider the class of very well-covered graphs $G^*$ obtained by appending a single pendant edge to each vertex of $G$. We prove that the pendant complete graph $K_n^*$ is $r$-EKR when $n \geq 2r$ and strictly so when $n>2r$. Pendant path graphs $P_n^*$ are also explored and the vertex whose $r$-star is of maximum size is determined.
\end{abstract}

\section{Introduction}
Let $G$ be a finite simple graph with vertex and edge sets $V(G)$ and $E(G)$, respectively. Let $\Ic^{(r)}(G)$ denote the family of all independent $r$-sets of $G$. We say that $\Ac\subseteq\Ic^{(r)}(G)$ is {\bf intersecting} if the intersection of each pair of sets in $\Ac$ is nonempty. One such intersecting family is an {\bf $r$-star} comprised of independent $r$-sets containing some fixed vertex $v\in V(G)$. The vertex $v$ is called the {\bf centre} of the $r$-star. A graph is {\bf $r$-EKR} if there exists an $r$-star whose size is maximum amongst all intersecting families of independent $r$-sets. If all extremal intersecting families are $r$-stars, the graph is considered to be strictly $r$-EKR. This naming stems from the classical Erd\H{o}s-Ko-Rado theorem, framed in the language of graph theory as follows:
\begin{theorem}[\emph{Erd\H{o}s-Ko-Rado} \cite{ClassicEKR}]
If $E_n$ is the empty graph of order $n$, then $E_n$ is $r$-EKR for $n \geq 2r$ and strictly so when $n>2r$. 
\end{theorem}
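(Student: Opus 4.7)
The plan is first to reduce the statement to the classical set-theoretic Erd\H{o}s--Ko--Rado theorem. Since $E_n$ has no edges, every vertex subset is independent, so $\Ic^{(r)}(E_n)$ is in bijection with $\binom{[n]}{r}$ where $[n]=V(E_n)$, and an $r$-star centred at a vertex $v$ is precisely $\{A\in\binom{[n]}{r}:v\in A\}$, of size $\binom{n-1}{r-1}$. The theorem therefore reduces to: any intersecting family $\Ac\subseteq\binom{[n]}{r}$ satisfies $|\Ac|\leq\binom{n-1}{r-1}$ when $n\geq 2r$, with equality only for stars when $n>2r$.

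I would prove the size bound by Katona's cyclic permutation method. Fix a cyclic ordering $\sigma$ of $[n]$ and call $A\in\binom{[n]}{r}$ a $\sigma$-\emph{arc} if its elements occupy $r$ consecutive positions along $\sigma$. The key preliminary is the arc lemma: for $n\geq 2r$, any intersecting family of $\sigma$-arcs contains at most $r$ members. To prove it, fix an arc $A$ in the family and observe that the $2r-2$ other arcs meeting $A$ can be grouped into $r-1$ pairs, where the arc shifted by $i$ positions one way around $\sigma$ is paired with the arc shifted by $r-i$ positions the other way; each such pair consists of two mutually disjoint arcs (disjointness uses $n\geq 2r$), so an intersecting family contains at most one arc per pair together with $A$ itself, for a total of at most $r$.

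The main double count is over $P=\{(A,\sigma):A\in\Ac,\ A\text{ is a }\sigma\text{-arc}\}$. The arc lemma gives $|P|\leq r\cdot(n-1)!$, while a routine bijective check shows that each fixed $r$-set is a $\sigma$-arc for exactly $r!(n-r)!$ cyclic orderings, so $|P|=|\Ac|\cdot r!(n-r)!$. Combining yields
\[
|\Ac|\leq\frac{r\cdot(n-1)!}{r!(n-r)!}=\binom{n-1}{r-1}.
\]

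The main obstacle will be strictness for $n>2r$. I would sharpen the arc lemma to: when $n>2r$, any intersecting family of exactly $r$ arcs along $\sigma$ has a common element. The idea is to record, for each of the $r-1$ disjoint pairs above, which arc is chosen; this ``pattern'' must be monotonic in the sense that all right-shift selections precede all left-shift selections, for otherwise two of the selected arcs can be shown to be disjoint when $n>2r$, and each monotonic pattern corresponds to a unique element of $A$ lying in every selected arc. Granting this, equality $|\Ac|=\binom{n-1}{r-1}$ in the double count forces every cyclic ordering $\sigma$ to yield exactly $r$ arcs of $\Ac$ sharing some vertex $v(\sigma)$; a short argument varying $\sigma$ by swaps of adjacent elements then shows $v(\sigma)$ is independent of $\sigma$, giving a single vertex $v$ contained in every member of $\Ac$ and exhibiting $\Ac$ as the $r$-star at $v$.
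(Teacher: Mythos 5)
The paper does not prove this statement: it is the classical Erd\H{o}s--Ko--Rado theorem, quoted as a black box from \cite{ClassicEKR} (and invoked later, in the proof of Theorem \ref{pkEKR}, exactly in the set-system form you reduce to). So there is no ``paper proof'' to match; what you have written is a self-contained proof via Katona's cycle method. Your reduction to intersecting families in $\binom{[n]}{r}$ is right, and the bound $|\Ac|\leq\binom{n-1}{r-1}$ is proved correctly and completely in outline: the arc lemma with its pairing of the $2r-2$ arcs meeting a fixed arc $A$ into $r-1$ disjoint pairs is the standard argument, and the double count $|\Ac|\cdot r!(n-r)! = |P| \leq r\cdot(n-1)!$ is correct.

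The strictness half is the delicate part, and two points in your sketch need repair. First, the claim that a non-monotonic left/right pattern makes ``two of the selected arcs'' disjoint is not true for the pair you implicitly point at: if the left choice is made at pair $i$ and the right choice at pair $j$ with $j-i\geq n-2r+1$, those two arcs wrap around the cycle and meet again on the far side (this already happens for $n=2r+1$, $r\geq 4$). The fix is standard but must be said: a non-monotonic pattern contains an \emph{adjacent} left-then-right transition at positions $i,i+1$, and for that adjacent pair the two arcs span only $2r+1$ consecutive positions, so $n>2r$ does force them to be disjoint. Second, the closing step --- ``a short argument varying $\sigma$ by swaps of adjacent elements then shows $v(\sigma)$ is independent of $\sigma$'' --- is where most of the work of the uniqueness proof actually lives; one has to argue that enough arcs through $v(\sigma)$ survive the swap to pin down $v(\sigma')$ (e.g.\ that the first and last arcs through $v(\sigma)$, which meet only in $v(\sigma)$, are unaffected when the swapped pair lies outside their union, and handle swaps near $v(\sigma)$ separately), and then that every $r$-set containing $v$ is an arc of some ordering. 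This programme does work and appears in the literature, but as written it is a placeholder rather than a proof; if you intend the strictness claim to be proved rather than cited, this step needs to be carried out.
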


This Erd\H{o}s-Ko-Rado property for graphs was formalized by Holroyd, Talbot, and Spencer in \cite{EKRComp}. There has been significant interest and progress in exploring the Erd\H{o}s-Ko-Rado property for many classes of graphs (e.g., \cite{DisUnions,SpecTrees,Chordal,LadderGraph}). In \cite{EKRProp}, Holroyd and Talbot conjecture a connection between the Erd\H{o}s-Ko-Rado property of a graph $G$ and the parameter $\mu(G)$ denoting the minimum size of a maximal independent set.
\begin{conjecture}[\emph{Holroyd and Talbot} \cite{EKRProp}]
\label{HT}
A graph $G$ will be $r$-EKR if $1 \leq r \leq \frac{\mu(G)}{2}$, and is strictly so if $2<r<\frac{\mu(G)}{2}.$
\end{conjecture}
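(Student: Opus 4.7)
The plan is to attack Conjecture \ref{HT} by combining a compression (shifting) argument adapted to the graph setting with an induction on $|V(G)|$, using $\mu(G)$ to drive the inductive hypothesis. The target is to show that any maximum intersecting family $\Ac \subseteq \Ic^{(r)}(G)$ can be transformed, without loss of cardinality, into a family contained in a single $r$-star.

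First I would fix an ordering $v_1, v_2, \ldots, v_n$ of $V(G)$ with $v_1$ chosen so that its $r$-star in $\Ic^{(r)}(G)$ has maximum size. For each pair $i < j$, define a shift operator $S_{ij}$ that replaces each $A \in \Ac$ containing $v_j$ but not $v_i$ by $(A \setminus \{v_j\}) \cup \{v_i\}$ whenever the resulting set is independent in $G$ and does not already lie in $\Ac$; otherwise $A$ is left fixed. The two standard checks --- preservation of cardinality and of the intersecting property --- are direct once independence is confirmed, so iterating $S_{ij}$ to stability should yield a left-compressed family $\Ac^*$ of the same size as $\Ac$.

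On a left-compressed $\Ac^*$ the analysis would proceed via a recursion on any $A_0 \in \Ac^*$: every set in $\Ac^*$ must meet $A_0$, so $|\Ac^*| \le \sum_{u \in A_0} |\Ac^*_u|$, where $\Ac^*_u$ is an intersecting family of independent $r$-sets in the residual graph $G_u := G - N[A_0 \setminus \{u\}]$. Provided $\mu(G_u) \ge 2r$ for each $u$ --- with strict inequality handling the case $2 < r < \mu(G)/2$ --- the inductive hypothesis bounds each $|\Ac^*_u|$ by a star count in $G_u$, and a telescoping comparison would reduce the right-hand sum to the size of the $v_1$-star in $G$, giving uniqueness in the strict range by forcing equality only when $\Ac^*$ is itself a star.

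The main obstacle --- and the reason Conjecture \ref{HT} has remained open for over two decades --- lies in both halves of this outline. The shift $S_{ij}$ may fail to preserve independence when $v_i$ is adjacent to some vertex of $A \setminus \{v_j\}$, and the parameter $\mu(G_u)$ can drop by more than one under the residual construction, breaking the induction. The plan for handling both issues is to use $\mu(G) \ge 2r$ to build a Hall-type matching between blocked sets and admissible replacements in an auxiliary bipartite graph, thereby controlling the defect $\mu(G) - \mu(G_u)$ along the recursion. Verifying this matching condition under only the hypothesis $r \le \mu(G)/2$ is the central difficulty; closing the gap appears to require a new structural exchange lemma for independent $r$-sets, which is precisely what partial progress in the literature --- including the very well-covered cases pursued in this paper --- has sought to provide for restricted graph classes.
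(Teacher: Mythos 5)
The statement you are trying to prove is Conjecture~\ref{HT}, which the paper does not prove and which remains open; the paper only cites Borg's partial result (valid when $\mu(G)\geq(r-1)r^2+r$) and proves special cases for $K_n^*$. Your proposal is therefore not comparable to a proof in the paper --- and, more importantly, it is not a proof at all but a strategy outline whose central steps you yourself flag as unverified. That is a genuine gap, and it occurs exactly where you say it does: the shift $S_{ij}$ need not preserve independence (if $v_i$ is adjacent to some vertex of $A\setminus\{v_j\}$ the compressed set is simply not in $\Ic^{(r)}(G)$, and the standard fix of ``leaving $A$ fixed'' destroys the usual structure of left-compressed families, so the downstream analysis does not go through as in the Erd\H{o}s--Ko--Rado setting); and the residual graphs $G_u=G-N[A_0\setminus\{u\}]$ delete up to $r-1$ closed neighbourhoods, which can drive $\mu(G_u)$ far below $2r$, so the inductive hypothesis is unavailable. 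This second loss is precisely why Borg's theorem needs the cubic lower bound on $\mu(G)$ rather than the conjectured linear one. The proposed repair --- a Hall-type matching controlling the defect $\mu(G)-\mu(G_u)$ --- is stated but never constructed, and no argument is given that the matching condition holds under the hypothesis $r\leq\mu(G)/2$ alone.

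There is also a smaller but real error in the recursion itself: if $\Ac^*_u$ denotes the sets of $\Ac^*$ meeting $A_0$ in $u$, these sets need not avoid $N[A_0\setminus\{u\}]$, so they are not independent $r$-sets of $G_u$ and the inequality $|\Ac^*|\leq\sum_{u\in A_0}|\Ac^*_u|$ cannot be fed into an inductive bound on $G_u$ without further work (one typically must pass to $(r-1)$-sets in $G-N[u]$ instead, which changes both the parameter $r$ and the graph). In short, the proposal identifies the known obstructions accurately but does not overcome any of them; it should be presented as a research programme, not a proof.
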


In \cite{borg1,borg2}, Borg proved this conjecture in a much more general form for any graph $G$ with $\mu(G)\geq(r-1)r^2+r$. This result is a major step forward in proving Conjecture \ref{HT}, however there is still a need to tighten the bound on $\mu(G)$. While introducing this conjecture, Holroyd and Talbot suggest that if a counterexample exists, it is likely that $\mu(G)$ is as large as possible. This occurs exactly when a graph is {\bf well-covered}, that is every maximal independent set is also of maximum size. This theory motivates the study of the Erd\H{o}s-Ko-Rado property of well-covered graphs.

In \cite{well_covered}, Finbow, Hartnell, and Nowakowski characterize well-covered graphs with girth at least 6. A vertex $p\in V(G)$ whose neighborhood contains a single vertex $x$ is considered to be a {\bf pendant vertex} and $px\in E(G)$ is a {\bf pendant edge}.

\begin{theorem}[\emph{Finbow, Hartnell, and Nowakowski} \cite{well_covered}] Let $G$ be a connected graph of girth at least~$6$, which is neither isomorphic to $C_7$ nor $K_1$. Then $G$ is well-covered if and only if its pendant edges form a perfect matching.
\end{theorem}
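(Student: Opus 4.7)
The forward direction is straightforward. Suppose the pendant edges form a perfect matching $M$, and let $I$ be any maximal independent set. For each pendant edge $pv \in M$ with $p$ the pendant vertex, independence forces $|I \cap \{p,v\}| \leq 1$ while maximality forces $|I \cap \{p,v\}| \geq 1$: if neither $p$ nor $v$ were in $I$, then since $p$'s only neighbor $v$ is also absent, $I \cup \{p\}$ would remain independent, contradicting maximality. Summing over $M$ gives $|I| = |M| = |V(G)|/2$ independently of $I$, so $G$ is well-covered.

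For the converse, assume $G$ is connected, has girth at least $6$, is well-covered, and is not isomorphic to $C_7$ or $K_1$. My plan is to prove three things in sequence. (a) $G$ has at least one pendant: if not, every vertex has degree $\geq 2$, and girth $\geq 6$ makes the radius-$2$ ball around any vertex a tree; an iterative extension argument shows well-coveredness then forces the graph to be a cycle, and among cycles of girth $\geq 6$ only $C_7$ is well-covered. (b) No vertex is adjacent to two pendants: if $v$ had distinct pendant neighbors $p_1, p_2$ and $I$ is a maximal independent set containing $v$, then $(I \setminus \{v\}) \cup \{p_1, p_2\}$ is independent of larger size (since $p_1, p_2$ have only $v$ as a neighbor) and extends to a maximal set exceeding $|I|$, contradicting well-coveredness. (c) Every vertex lies on a pendant edge: if some $w$ does not, build two maximal independent sets by making opposite choices at $w$ and propagating the choice along a shortest path to a pendant --- one includes $w$ together with the supports near $w$ and pendants further out, the other swaps these roles. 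The girth $\geq 6$ hypothesis ensures these choices commit no adjacency violations (short cycles would otherwise let a local swap propagate into a conflict), and the two sets end up differing in size by one, contradicting well-coveredness.

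The main obstacle is step (c), where the combinatorial bookkeeping along the path from $w$ to a pendant must be carried out carefully, verifying at each stage that the girth condition prevents a short cycle from blocking the alternating swap. Step (a) is also technically nontrivial and is precisely where the exceptions $C_7$ and $K_1$ arise as the unique well-covered girth-$\geq 6$ graphs with no pendants. Once (a)--(c) are in hand, combining (b) and (c) shows that the pendant edges partition $V(G)$ into disjoint pairs, i.e.\ form a perfect matching.
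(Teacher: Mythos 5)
This theorem is quoted from Finbow, Hartnell, and Nowakowski; the paper gives no proof of it, so there is nothing to compare your argument against except the statement itself. Judged on its own terms, your proposal proves one implication and sketches, but does not establish, the other. The implication you do prove (pendant edges form a perfect matching $\Rightarrow$ well-covered, which is the ``if'' direction, not the forward one) is correct and complete: the domination/independence argument on each pendant edge forces every maximal independent set to have size exactly $|V(G)|/2$. Step (b) of your converse is also a correct and complete local argument.

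The genuine gaps are steps (a) and (c), which is where essentially all of the content of this theorem lives. In (a), the assertion that a pendant-free, connected, well-covered graph of girth at least $6$ must be a cycle is not something an ``iterative extension argument'' delivers for free; it is close to being a restatement of the theorem itself in the pendant-free case, and the radius-$2$ ball being a tree does not by itself rule out vertices of degree $3$ or more. In (c), the ``propagate the choice along a shortest path to a pendant'' construction is underspecified in exactly the way that matters: a vertex $w$ not on a pendant edge may have several neighbors and several branches leading away from it, and a swap made at $w$ must be reconciled on \emph{every} branch simultaneously, not just along one chosen path; showing that the two resulting maximal independent sets differ in size by exactly one requires controlling all of these branches at once, and girth $\geq 6$ enters through a more delicate structural analysis (e.g., that distinct vertices at distance two share at most one common neighbor) rather than merely ``preventing a short cycle from blocking the swap.'' As written, (c) would fail on a vertex of high degree whose branches interact. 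The original Finbow--Hartnell--Nowakowski proof is a substantial case analysis precisely because of these issues, so the converse direction should be regarded as unproven in your write-up.
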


 A graph whose pendant edges form a perfect matching can be constructed by appending a single pendant edge to each vertex of some base graph $G$. We denote this well-covered graph as $G^*$ and refer to it as a pendant graph. In addition to being well-covered, a pendant graph is also {\bf very well-covered} as there are no isolated vertices and $\mu(G)=\frac{|V(G)|}{2}$.

Here we aim to motivate the study of pendant graphs in relation to the Erd\H{o}s-Ko-Rado property. In \cite{dis_cliques}, Boll\'{o}bas and Leader prove the first and only prior EKR-type result regarding pendant graphs. They show that the disjoint union of at least $r$ cliques $K_t$ with $t\geq 2$ is $r$-EKR. The special case of size 2 cliques gives that when $n\geq r$, $E_n^*$ is $r$-EKR and strictly so unless $n=r$. A recent result by Estrugo and Pastine proves that every pendant graph has an $r$-star of maximum size whose centre is a pendant. The result requires the existence of an {\bf escape path}, defined to be a path $v_1v_2\cdots v_n$ such that $\textrm{deg}(v_n)=1$ and $\textrm{deg}(v_i)=2$ for $2\leq i\leq n-2$.
\begin{theorem}[\emph{Estrugo and Pastine} \cite{escape_path}]\label{escape} Let $G$ be a graph, $v\in V(G)$, and $r\geq 1$. If there is an escape path from $v$ to a pendant vertex $p$, then $|\Ic^{(r)}_{v}(G)|\leq|\Ic^{(r)}_p(G)|$.
\end{theorem}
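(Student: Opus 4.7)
The plan is to exhibit an explicit injection $\phi\colon \Ic^{(r)}_v(G) \to \Ic^{(r)}_p(G)$; this immediately yields the claimed inequality. Writing the escape path as $v=v_1,v_2,\ldots,v_n=p$ and setting $V_P=\{v_1,\ldots,v_n\}$, the map I would use is
\[
\phi(A) \ := \ (A\setminus V_P)\ \cup\ \{\,v_{n+1-i}:v_i\in A\cap V_P\,\},
\]
i.e.\ freeze the off-path portion of $A$ and reflect its on-path portion. Since $v_1\in A$, the image contains $v_n=p$, and because the reflection $v_i\mapsto v_{n+1-i}$ is a bijection of $V_P$ to itself, $|\phi(A)|=|A|=r$.

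The central step is to verify that $\phi(A)$ is independent in $G$. On the path the ``no two consecutive indices'' pattern is preserved by the reflection, so no edge $v_jv_{j+1}$ has both endpoints in $\phi(A)\cap V_P$. The escape-path hypothesis that $\deg_G(v_i)=2$ for $2\le i\le n-2$ means such interior vertices have no edges leaving $\{v_{i-1},v_{i+1}\}$; consequently the only potentially problematic edges of $G$ must involve the boundary vertices $v_1$ or $v_{n-1}$, and fall into one of three types: (i) a hypothetical chord $v_1v_{n-1}$, (ii) an off-path neighbour $w$ of $v_1$, or (iii) an off-path neighbour $u$ of $v_{n-1}$. All three are handled by the simple observations $v_1\in\phi(A)\iff v_n\in A$ and $v_{n-1}\in\phi(A)\iff v_2\in A$. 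Since $v_1\in A$ forces $v_2\notin A$, we always have $v_{n-1}\notin\phi(A)$, ruling out cases (i) and (iii). For case (ii), if $v_n\in A$ (so $v_1\in\phi(A)$), any neighbour $w$ of $v_1$ satisfies $w\notin A$ because $v_1\in A$, and hence $w\notin\phi(A)$.

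Injectivity of $\phi$ comes for free because the reflection is an involution: applying the same formula to $\phi(A)$ returns $A$, so $A$ is uniquely recoverable from its image. I expect the independence verification to be the main obstacle, as it is exactly the place where the escape-path hypothesis earns its keep: the degree-$2$ constraint on interior vertices is what prevents the shifted path vertices from colliding with off-path elements of $A$, while the little dichotomy above is needed to control the two boundary vertices $v_1,v_{n-1}$ whose degrees the hypothesis leaves unconstrained. Once the independence check is in hand, $\phi$ is the desired injection and the inequality $|\Ic^{(r)}_v(G)|\le |\Ic^{(r)}_p(G)|$ follows.
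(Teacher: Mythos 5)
Your argument is correct, but be aware that this paper contains no proof of Theorem \ref{escape} to compare it against: the statement is imported from Estrugo and Pastine, and the only related argument in the paper is Lemma \ref{injections}, which handles the degenerate length-two escape path $x_ip_i$ via the swap $S\mapsto S\setminus\{x_i\}\cup\{p_i\}$. Your reflection $v_i\mapsto v_{n+1-i}$ is a clean one-shot generalization of that swap, and your case analysis is complete: since each of $v_2,\dots,v_{n-2}$ has degree $2$ and $v_n$ has degree $1$, every neighbour of these vertices already lies on the path, so beyond the consecutive-index check the only edges that could ruin independence of $\phi(A)$ are a chord $v_1v_{n-1}$ or an off-path edge at $v_1$ or at $v_{n-1}$ (whose degree the definition indeed leaves unconstrained). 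Your two observations --- that $v_{n-1}\in\phi(A)$ would require $v_2\in A$, impossible because $v_1\in A$, and that every neighbour $w$ of $v_1$ is excluded from $A$ and hence from $\phi(A)\setminus V_P$ --- eliminate all three, and the involution property gives injectivity. What the paper's route buys is brevity: only the length-two case is ever needed, since in $G^*$ the edge $xp_x$ is itself an escape path. What yours buys is a short, self-contained proof of the full cited theorem rather than an appeal to the literature.
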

In the pendant graph $G^*$, the edge between a vertex $x$ in the base graph $G$ and its corresponding pendant vertex $p_x$ is an escape path from $x$ to $p_x$. This implies that for every vertex $v\in V(G^*)$, $|\Ic^{(r)}_v(G^*)|\leq|\Ic^{(r)}_p(G^*)|$ for some pendant vertex $p$.

In this paper, we prove the following results for the pendant graphs $K_n^*$ and $P_n^*$.
\begin{theorem}\label{pkEKR}
The pendant complete graph $K_n^*$ is $r$-EKR for $n \geq 2r$, and strictly so for $n > 2r$.
\end{theorem}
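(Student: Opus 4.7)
The plan is to parameterize each independent $r$-set $A$ of $K_n^*$ as a pair $(\bar{A},\ell(A))$, where $\bar{A}\in\binom{[n]}{r}$ indexes the pendant/vertex ``columns'' that $A$ occupies and $\ell(A)\in\bar{A}\cup\{0\}$ records the unique clique-vertex index of $A$ (with $\ell=0$ meaning $A$ consists entirely of pendants). A direct count gives $|\Ic^{(r)}_{p_1}(K_n^*)|=\binom{n-1}{r-1}+(n-1)\binom{n-2}{r-2}=r\binom{n-1}{r-1}$, and by Theorem~\ref{escape} together with the $S_n$-symmetry on indices, this value is already the maximum $r$-star size. The task is therefore to show $|\Ac|\leq r\binom{n-1}{r-1}$ for every intersecting family $\Ac\subseteq\Ic^{(r)}(K_n^*)$, and to identify the equality case when $n>2r$.

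The first step is projection: the map $\pi(A)=\bar{A}$ sends $\Ac$ to an intersecting family of $r$-subsets of $[n]$, since any shared vertex $p_i$ or $v_i$ forces the common index $i\in\bar{A_1}\cap\bar{A_2}$. By the classical Erd\H{o}s--Ko--Rado theorem, $|\pi(\Ac)|\leq\binom{n-1}{r-1}$ whenever $n\geq 2r$. I split $\Ac=\Ac_P\sqcup\Ac_V$ by whether $\ell(A)=0$ or not. Each fiber of $\pi$ restricted to $\Ac_V$ has at most $r$ elements (one per clique-vertex choice in $T$), and $\pi(\Ac_V)$ is itself intersecting, so $|\Ac_V|\leq r\binom{n-1}{r-1}$. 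This already settles the case $\Ac_P=\emptyset$.

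The hard part is the case $\Ac_P\neq\emptyset$. The plan is to fix some $T_0$ with $P_{T_0}\in\Ac_P$ and exploit that every $B=(T,\ell)\in\Ac$ must share a pendant with $P_{T_0}$, giving the constraint $T\cap T_0\not\subseteq\{\ell\}$. Consequently, fibers of $\pi(\Ac)$ over $T$ with $T\cap T_0=\emptyset$ are forced to be empty; fibers with $|T\cap T_0|=1$ lose the lift at the unique common index (so contribute at most $r$ sets including $P_T$); and only fibers with $|T\cap T_0|\geq 2$ can reach the ``oversize'' value $r+1$, which itself requires $r\geq 3$. I expect the closing inequality to come from a global double count $|\Ac|=\sum_{T\in\pi(\Ac)}f(T)\leq r\binom{n-1}{r-1}$ in which each $+1$ excess from an oversize fiber $T$ is compensated by the pairwise-intersection requirements against both $P_{T_0}$ and each $A^{T_0}_j\in\Ac$: these force $|\bar{B}\cap T_0|\geq 2$ (and $\geq 3$ when $\ell(B)\in T_0$) for every other $B\in\Ac$, pushing $|\pi(\Ac)|$ strictly below $\binom{n-1}{r-1}$. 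Reconciling the oversize fibers with the EKR bound on $\pi(\Ac)$ through this accounting is the central technical difficulty, and I anticipate using a Hilton--Milner-type inequality or an averaged fiber bound to finish.

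Finally, for strict $r$-EKR when $n>2r$, I would track the equality case through the projection: the strict form of EKR forces $\pi(\Ac)$ to be the 1-star $\{T:i^*\in T\}$ for some $i^*\in[n]$, and tightness in the fiber bounds then requires $\ell(A)\neq i^*$ for every $A\in\Ac$, so that $p_{i^*}\in A$ throughout and $\Ac$ is precisely the $p_{i^*}$-star.
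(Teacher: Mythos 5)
Your setup is sound as far as it goes: the projection $\pi(A)=\bar A$ does send an intersecting family to an intersecting family of $r$-subsets of $[n]$, the count $|\Ic^{(r)}_{p_1}(K_n^*)|=r\binom{n-1}{r-1}$ is correct, and the case $\Ac_P=\emptyset$ follows from the fiber bound of $r$ together with classical EKR on $\pi(\Ac)$. But the main case is not proved. The naive bound from your framework is $|\Ac|\le (r+1)|\pi(\Ac)|\le (r+1)\binom{n-1}{r-1}$, which overshoots by a full $\binom{n-1}{r-1}$, and your plan for recovering that excess --- charging each oversize fiber against a deficit in $|\pi(\Ac)|$ forced by intersection with $P_{T_0}$ and its lifts --- is only sketched. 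You state explicitly that reconciling the oversize fibers with the EKR bound is ``the central technical difficulty'' and that you ``anticipate'' a Hilton--Milner-type inequality will finish it. That anticipated step \emph{is} the theorem: the extremal family (the star at a pendant) has $|\pi(\Ac)|$ exactly equal to $\binom{n-1}{r-1}$ with every fiber of size exactly $r$, so any compensation argument must be exactly tight, and nothing in the proposal establishes the required accounting. The strictness claim inherits the same gap, since it presupposes the equality analysis of a bound you have not derived.

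For comparison, the paper avoids the fiber-compensation problem entirely by first applying a compression (Lemma~\ref{pendint}): iteratively replacing $x_i$ by $p_i$ in any set whose intersection with some other member is exactly $\{x_i\}$, producing an equinumerous intersecting family $\Bc$ in which no two sets meet only inside $V(K_n)$. After this normalization, the sets of $\Bc$ containing a clique vertex, with that vertex deleted, form an intersecting family $\Rc$ of $(r-1)$-subsets of the $n$ pendants (this is where condition (2) of the compression is used), so $|\Rc|\le\binom{n-1}{r-2}$ by EKR, and each member of $\Rc$ extends to at most $n-r+1$ sets of $\Bc$. Combined with $|\Pc|\le\binom{n-1}{r-1}$ for the all-pendant part, the identity $\binom{n-1}{r-1}+(n-r+1)\binom{n-1}{r-2}=r\binom{n-1}{r-1}$ closes the argument, and strictness falls out of the equality cases of the two EKR applications. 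If you want to salvage your approach, the missing ingredient is some analogue of this compression, or a genuinely new double-counting identity; as written, the proof has a hole exactly where the work is.
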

\begin{theorem}
\label{thetheorem}
Let $1\leq r\leq n$. The $r$-stars whose centre is one of the two pendant vertices appended to the second outermost vertices on the base graph $P_n$ are maximum size $r$-stars of $P_n^*$.
\end{theorem}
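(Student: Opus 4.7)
The plan is to first invoke Estrugo--Pastine to restrict attention to $r$-stars centred at pendant vertices, then translate the counting problem into a generating function and reduce the theorem to a coefficientwise polynomial inequality, and finally prove that inequality by a telescoping/pairing argument driven by a short recurrence identity.

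By Theorem~\ref{escape}, the edge $v_ip_i$ is an escape path in $P_n^*$, so $|\Ic^{(r)}_{v_i}(P_n^*)| \le |\Ic^{(r)}_{p_i}(P_n^*)|$ and it suffices to locate the maximum of $f(i) := |\Ic^{(r)}_{p_i}(P_n^*)|$ over $1 \le i \le n$. Deleting $p_i$ together with its neighbour $v_i$ from $P_n^*$ splits the graph into $P_{i-1}^* \sqcup P_{n-i}^*$, whence $f(i) = |\Ic^{(r-1)}(P_{i-1}^* \sqcup P_{n-i}^*)|$. Define $G_m(x) := \sum_{r \ge 0} |\Ic^{(r)}(P_m^*)|\, x^r$, so $f(i) = [x^{r-1}]\, G_{i-1}(x)\,G_{n-i}(x)$. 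Conditioning on whether the last tooth of $P_m^*$ contributes nothing, the pendant $p_m$, or the base vertex $v_m$ yields the recurrence
\[
G_m(x) = (1+x)\bigl[G_{m-1}(x) + x\,G_{m-2}(x)\bigr] \qquad (m \ge 2),
\]
with $G_0 = 1$ and $G_1 = 1+2x$. Thus the theorem reduces to the claim that for every $0 \le m \le n-1$, each coefficient of $G_1(x)G_{n-2}(x) - G_m(x)G_{n-1-m}(x)$ is non-negative; taking the coefficient of $x^{r-1}$ then gives $f(2) \ge f(i)$ for all $i$, while $f(n-1) = f(2)$ follows from the symmetry $G_aG_b = G_bG_a$.

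To prove the coefficientwise inequality, set $\Delta_{a,b}(x) := G_a G_b - G_{a-1}G_{b+1}$. Applying the recurrence to $G_a$ and to $G_{b+1}$ separately yields the contraction $\Delta_{a,b} = -x(1+x)\,\Delta_{a-1,b-1}$, and a direct computation gives the base $\Delta_{1,k}(x) = x^2(1+x)\,G_{k-2}(x)$ for $k \ge 2$. Iterating gives $\Delta_{a,b} = (-1)^{a-1}x^{a+1}(1+x)^a\, G_{b-a-1}(x)$ whenever $b > a$. Telescoping,
\[
G_1G_{n-2} - G_mG_{n-1-m} = -\sum_{a=2}^{m}\Delta_{a,\,n-1-a} = \sum_{a=2}^{m}(-1)^a\,x^{a+1}(1+x)^a\,G_{n-2-2a}(x),
\]
an alternating sum. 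Non-negativity follows by pairing consecutive indices $a=2k,2k+1$: one more application of the defining recurrence collapses each pair into the single polynomial $x^{2k+1}(1+x)^{2k+1}\,G_{n-3-4k}(x)$, whose coefficients are non-negative. Any unpaired term (arising only when $m$ is even) equals $x^{m+1}(1+x)^m\,G_{n-2-2m}(x)$, again non-negative. By symmetry only $0 \le m \le (n-1)/2$ needs to be treated, and the case $m=0$ reduces directly to $\Delta_{1,n-2} = x^2(1+x)\,G_{n-4}$.

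The main technical obstacle is handling the boundaries of the telescoping: when $n$ is odd and $a = (n-1)/2$ the generic formula fails because $n-1-a = a$, and one must substitute the modified base $\Delta_{a,a} = (-1)^{a-1}x^{a+1}(1+x)^{a-1}$; correspondingly, the final pair-consolidation step relies on the $G$-recurrence at a small index and has to be verified directly rather than by appeal to the general identity. These adjustments are routine but prevent a purely formal one-line derivation, and they are also the only place where the parity of $n$ and $m$ enters the argument.
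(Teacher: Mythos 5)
Your proof is correct, and it takes a genuinely different route from the paper's. The paper first proves the special case $r=n$, where $|\Ic^{(n)}_{p_k}(P_n^*)|$ factors as a product of Fibonacci numbers $F(k-1)F(n-k)$ whose maximum is located via Binet's formula, and then propagates this diagonal base case through a double induction on $r$ and $n$ using the four-term recurrence of Lemma~\ref{bigrecur} (which is precisely the coefficient form of your identity $G_m=(1+x)[G_{m-1}+xG_{m-2}]$). You instead work with the full generating polynomials, observe that $|\Ic^{(r)}_{p_i}(P_n^*)|=[x^{r-1}]G_{i-1}G_{n-i}$, and prove the stronger statement that $G_1G_{n-2}-G_mG_{n-1-m}$ is coefficientwise non-negative, via the exact contraction $\Delta_{a,b}=-x(1+x)\Delta_{a-1,b-1}$ with base $\Delta_{1,k}=x^2(1+x)G_{k-2}$ and a telescoping/pairing of the resulting alternating sum. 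I checked the contraction identity, the closed form $\Delta_{a,b}=(-1)^{a-1}x^{a+1}(1+x)^aG_{b-a-1}$ for $b>a$, the pair collapse to $x^{2k+1}(1+x)^{2k+1}G_{n-3-4k}$, and the boundary substitution $\Delta_{a,a}=(-1)^{a-1}x^{a+1}(1+x)^{a-1}$ needed when $n$ is odd and $a=(n-1)/2$ (e.g.\ $n=5$, $m=2$ gives $G_1G_3-G_2^2=x^3(1+x)$, exactly as your modified base predicts); all are correct. Your approach buys an exact formula for the deficit $|\Ic^{(r)}_{p_2}|-|\Ic^{(r)}_{p_{m+1}}|$ valid for all $r$ simultaneously, and it avoids both the Binet computation of Lemma~\ref{FiboMaximum} and the double induction; its cost is the boundary bookkeeping you flag at the end, which is real but routine, exactly as you describe. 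The paper's argument, by contrast, isolates the combinatorially transparent Fibonacci structure of the $r=n$ case, which it then reuses to show $P_n^*$ is not $n$-EKR.
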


Theorem \ref{pkEKR} aligns with Conjecture \ref{HT} and we believe a similar EKR-type result holds for $P_n^*$ although we establish $P_n^*$ is not $n$-EKR. The proof of Theorem \ref{thetheorem} is of particular interest as we establish a recurrence relation that reduces to the Fibonacci sequence when $r=n$.

\section{Pendant Complete Graphs}
We begin by establishing notation used throughout. Given a graph $G$ with vertices $\{x_1, \ldots, x_n\}$, the pendant graph of $G$, denoted $G^*$, is defined by the following: 
\[ V(G^*) = \{x_1, \ldots, x_n\} \sqcup \{p_1, \ldots, p_n\},\] \[E(G^*) = E(G) \sqcup \{x_1p_1, \ldots, x_np_n\}. \]
When referring to the base graph $G$ in regards to $G^*$, it is more precisely defined as $G:=G^*[\{x_1,\ldots,x_n\}]$.

Recall that $\Ic^{(r)}(G^*)$ is the set of all independent $r$-sets of $G^*$. Further, for $v\in V(G^*)$ define $\Ic^{(r)}_v(G^*)$ to be the $r$-star comprised of all independent $r$-sets of $G^*$ containing $v$.
\begin{lemma}
For $n,r\geq 1$,
    \begin{align}
       \left| \Ic^{(r)}(K^*_n) \right| =
       \begin{cases}(r+1) \binom{n}{r} & \text{if }r\leq n\\
       0&\text{otherwise}.
       \end{cases}
    \end{align}
\end{lemma}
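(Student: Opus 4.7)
The plan is to count independent $r$-sets of $K_n^*$ by partitioning them according to how many ``base'' vertices $x_i$ they contain. The crucial structural observation is that any two base vertices $x_i, x_j$ are adjacent in $K_n^*$ (since they are adjacent in $K_n$), so any independent set contains at most one $x_i$. Moreover, $x_i$ and $p_i$ are adjacent by construction, so the presence of $x_i$ forbids $p_i$.

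With this in mind, I would split $\Ic^{(r)}(K_n^*)$ into two classes. First, the independent $r$-sets containing no base vertex; these are exactly the $r$-subsets of the pendants $\{p_1,\ldots,p_n\}$, and since pendants are pairwise non-adjacent, every such subset is independent. There are $\binom{n}{r}$ of these. Second, the independent $r$-sets containing exactly one base vertex $x_i$; there are $n$ choices for $i$, and then one must pick an $(r-1)$-subset of the remaining pendants $\{p_j : j\neq i\}$ (since $p_i$ is forbidden, and all other pendants are non-adjacent to $x_i$ and to each other). This yields $n\binom{n-1}{r-1}$ sets. Using the identity $n\binom{n-1}{r-1}=r\binom{n}{r}$, the total is
\[
\binom{n}{r}+r\binom{n}{r}=(r+1)\binom{n}{r},
\]
as required.

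For the ``otherwise'' case $r>n$, observe that any independent set $S$ of $K_n^*$ has $|S|\le n$: if $S$ contains some $x_i$ then $S\subseteq\{x_i\}\cup\{p_j:j\ne i\}$, which has size $n$; otherwise $S\subseteq\{p_1,\ldots,p_n\}$, again of size at most $n$. Hence $\Ic^{(r)}(K_n^*)=\varnothing$ when $r>n$.

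There is no real obstacle here; the proof is a short case-split on whether the independent set meets the clique, followed by a one-line binomial identity. The only thing worth being careful about is the edge cases $r=0$ (where the formula correctly gives $1$ for the empty independent set) and $r=n$ (where both cases contribute and the count reduces to $n+1$, matching the all-pendant set together with the $n$ sets $\{x_i\}\cup\{p_j:j\ne i\}$), which provide useful sanity checks.
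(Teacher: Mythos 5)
Your proof is correct and takes essentially the same approach as the paper: partition the independent $r$-sets by whether they meet the clique, count each class, and dispose of $r>n$ by bounding the size of any independent set. The only cosmetic difference is that you count the second class as $n\binom{n-1}{r-1}$ and invoke a binomial identity, whereas the paper obtains $r\binom{n}{r}$ directly by viewing each such set as a pendant-only $r$-set with one of its $r$ pendants lowered to its base vertex.
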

\begin{proof}
Suppose $\Ic^{(r)}(K_n^*)$ is nonempty. Note that an independent set of $K_n^*$ can contain at most one vertex of $K_n$. Since there are $n$ pendant vertices, this implies $r\leq n+1$. Sets of size $n+1$ containing at most one vertex of $K_n$ contain a pendant-base vertex pair. Hence no set of size $n+1$ is independent, so we may assume $r\leq n$.

Partition the set of independent sets of size $r\leq n$ into those containing only pendant vertices, and those containing exactly one vertex from $K_n$. The set of $n$ pendant vertices form an independent set, thus there are $\binom{n}{r}$ independent $r$-sets containing only pendant vertices. Each independent set containing one vertex in $K_n$ can be realized as a set of only pendant vertices with one of the $r$ pendants $p_i$ mapped to its corresponding base vertex $x_i$. This gives $r\binom{n}{r}$ independent sets containing at least one vertex from $K_n$, proving the claim.
\end{proof}

The following lemma uses compression-like operation similar to that of \cite{EKRComp}. For $v\in G^*$, we define $G^*\downarrow v:=G^*\backslash N[v]$.

\begin{lemma} \label{cliqstars} Let $p$ be a pendant vertex of $K_n^*$ and suppose $r\leq n$, then
    \begin{align}
        \left| \Ic^{(r)}_p(K_n^*) \right| = \left| \Ic^{(r-1)}(K^*_{n-1}) \right| = r \binom{n-1}{r-1}.
    \end{align}
\end{lemma}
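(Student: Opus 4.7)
The plan is to use the compression operation $G^*\downarrow v = G^*\setminus N[v]$ introduced right before the lemma, and show that it sets up a bijection between the $r$-star at $p$ in $K_n^*$ and the collection of all independent $(r-1)$-sets of $K_{n-1}^*$. The second equality is then immediate from the previous lemma.

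First I would fix a pendant vertex, say $p = p_1$ with unique neighbor $x_1$, so that $N[p_1] = \{p_1, x_1\}$. Then I would describe the subgraph $K_n^* \downarrow p_1$: its vertex set is $\{x_2,\ldots,x_n\}\sqcup\{p_2,\ldots,p_n\}$, the induced edges on $\{x_2,\ldots,x_n\}$ form a $K_{n-1}$, and each $p_i$ remains pendant to $x_i$ for $2\le i\le n$. This is precisely the pendant complete graph $K_{n-1}^*$.

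Next I would define the map $\Phi:\Ic^{(r)}_{p_1}(K_n^*)\to\Ic^{(r-1)}(K_{n-1}^*)$ by $\Phi(A) = A\setminus\{p_1\}$. Any $A\in\Ic^{(r)}_{p_1}(K_n^*)$ is an independent $r$-set containing $p_1$, so it cannot contain $x_1$ (since $p_1x_1\in E(K_n^*)$); hence $A\setminus\{p_1\}$ is an independent $(r-1)$-set of $K_n^*$ disjoint from $N[p_1]$, i.e., an element of $\Ic^{(r-1)}(K_{n-1}^*)$. The inverse is $B\mapsto B\cup\{p_1\}$: for any $B\in\Ic^{(r-1)}(K_{n-1}^*)$, adjoining $p_1$ remains independent because $p_1$'s only neighbor $x_1$ lies outside $K_{n-1}^*$ and hence outside $B$. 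Thus $\Phi$ is a bijection and $\left|\Ic^{(r)}_{p_1}(K_n^*)\right| = \left|\Ic^{(r-1)}(K_{n-1}^*)\right|$.

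Finally, since $r\le n$ gives $r-1\le n-1$, I would invoke the preceding lemma with parameters $(n-1,r-1)$ to conclude
\[
\left|\Ic^{(r-1)}(K_{n-1}^*)\right| = r\binom{n-1}{r-1}.
\]
There is essentially no obstacle here; the content of the lemma is the identification $K_n^*\downarrow p = K_{n-1}^*$, which is almost a tautology given how $K_n^*$ is constructed, and the rest is a one-line bijection plus a reference to the previous lemma.
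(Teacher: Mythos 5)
Your proposal is correct and follows the paper's own argument exactly: the paper also uses the bijection $I\mapsto I\setminus\{p\}$ onto $\Ic^{(r-1)}(K_n^*\downarrow p)$, notes $K_n^*\downarrow p\cong K_{n-1}^*$, and invokes the preceding counting lemma. Yours simply spells out the verification of independence and the inverse map in more detail.
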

\begin{proof}
Consider the bijection 
\[f:\Ic_p^{(r)}(K_n^*)\to\Ic^{(r-1)}(K_{n}^*\downarrow p)\]
defined as $f(I)=I\backslash\{p\}$. Furthermore, $K_n^*\downarrow p\cong K_{n-1}^*$ as $p$ is a pendant vertex.
\end{proof}

Now, given any independent intersecting family $\Ac$ on $K^*_n$, we look to construct an injective mapping to an independent intersecting family that permits favorable partitions. 

\begin{lemma}\label{pendint}
    Let $\Ac\subseteq\Ic^{(r)}(K^*_n)$ be an intersecting family. Then, there exists an intersecting family $\Bc\subseteq\Ic^{(r)}(K_n^*)$ satisfying
    \vspace{0.2cm}
\begin{compactenum}[\hspace{0.25cm} 1. ]
        \item $|\Ac|=|\Bc|$, and
        \item if $B_1,B_2\in \Bc$, then $B_1\cap B_2\not\subseteq V(K_n)$.
\end{compactenum}
\vspace{0.2cm}
\end{lemma}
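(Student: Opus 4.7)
The plan is to repeatedly apply a pendant-replacement compression, modeled on the operation in \cite{EKRComp}, which pushes each base vertex $x_i$ onto its pendant partner $p_i$ whenever doing so does not collide with an existing set in the family. For each $i\in\{1,\ldots,n\}$ and each family $\mathcal{F}\subseteq\mathcal{I}^{(r)}(K_n^*)$, define pointwise
\[ C_i(A) = \begin{cases} (A\setminus\{x_i\})\cup\{p_i\} & \text{if } x_i\in A \text{ and }(A\setminus\{x_i\})\cup\{p_i\}\notin\mathcal{F}, \\ A & \text{otherwise,}\end{cases}\]
and set $C_i(\mathcal{F})=\{C_i(A):A\in\mathcal{F}\}$. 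The output is always an independent $r$-set: if $x_i\in A$, then all remaining members of $A$ must be pendants (as $K_n$ is complete), so replacing $x_i$ by $p_i$ preserves independence. Tracking which sets are altered shows $C_i$ is injective on $\mathcal{F}$, so $|C_i(\mathcal{F})|=|\mathcal{F}|$.

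The key step is to verify that if $\mathcal{A}$ is intersecting then so is $C_i(\mathcal{A})$. For $A_1,A_2\in\mathcal{A}$, if neither is altered the intersection persists, and if both are altered then $x_i$ lay in both, forcing $p_i\in C_i(A_1)\cap C_i(A_2)$. The only delicate case is when exactly one, say $A_1$, is altered; if $A_1\cap A_2$ contains some $y\neq x_i$ then $y$ survives, so we reduce to $A_1\cap A_2=\{x_i\}$, which forces $x_i\in A_2$. Either $p_i\in A_2$, in which case $p_i\in C_i(A_1)\cap A_2$, or $p_i\notin A_2$; in the latter situation, the only reason $A_2$ was not altered must be that $A_2':=(A_2\setminus\{x_i\})\cup\{p_i\}\in\mathcal{A}$, but then $A_1\cap A_2'=\emptyset$ (since $p_i\notin A_1$), contradicting that $\mathcal{A}$ is intersecting.

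Applying $C_1,\ldots,C_n$ in sequence yields the desired $\mathcal{B}$. A crucial stability observation is that once $C_i$ has been applied, no subsequent $C_j$ ($j\neq i$) alters a set containing $x_i$, since such a set cannot contain $x_j$ (as $x_ix_j\in E(K_n)$). Hence the invariant carved out by $C_i$ --- namely, ``$x_i\in B \Rightarrow (B\setminus\{x_i\})\cup\{p_i\}\in\mathcal{B}$'' --- survives all later compressions. Finally, suppose for contradiction that $B_1,B_2\in\mathcal{B}$ satisfy $B_1\cap B_2\subseteq V(K_n)$; since each independent set contains at most one base vertex and $\mathcal{B}$ is intersecting, $B_1\cap B_2=\{x_i\}$ for some $i$. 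The guaranteed companion $B_1':=(B_1\setminus\{x_i\})\cup\{p_i\}$ lies in $\mathcal{B}$ and is disjoint from $B_2$ (as $p_i\notin B_2$), producing the required contradiction. The main obstacle is the case analysis establishing intersectingness under a single compression --- specifically, showing that the compressed ``witness'' $A_2'$ must already be in the family to forestall a failure when the unique shared element is the one being replaced.
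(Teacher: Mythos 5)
Your proof is correct, and it follows the same overall strategy as the paper --- an iterated compression pushing each base vertex $x_i$ onto its pendant $p_i$ --- but the two arguments diverge in the trigger for compression and hence in the case analysis. The paper's operation $\varphi_i$ replaces $x_i$ by $p_i$ in $A$ only when there is a witness $C$ in the current family with $A\cap C=\{x_i\}$, i.e., only sets actually implicated in a bad intersection are moved; preservation of the intersecting property then uses that witness to show both offending sets are moved together, and the final property (2) is verified by chasing preimages through the $\varphi_i$'s. You instead use the standard EKR-style compression (move $A$ whenever $x_i\in A$ and its image is not already present), and your delicate case --- exactly one of $A_1,A_2$ moved, with $A_1\cap A_2=\{x_i\}$ --- is resolved by observing that the unmoved set's companion must already lie in the family and would be disjoint from $A_1$, contradicting intersectingness. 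Your closing argument via the stability invariant ``$x_i\in B\Rightarrow(B\setminus\{x_i\})\cup\{p_i\}\in\mathcal{B}$,'' which persists because sets containing $x_i$ and all-pendant sets are untouched by later compressions, is arguably cleaner than the paper's preimage-tracing. Both routes are sound; yours has the advantage of using the off-the-shelf compression, while the paper's witness-triggered version moves fewer sets and ties each move directly to the intersection it is meant to repair.
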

\begin{proof}
Let $\Ac_n:=\Ac$ and for each $1\leq i\leq n$ recursively define $\Ac_{i-1}:=\varphi_i(\Ac_i)$ where
\begin{align*}
    \varphi_i(A) = \begin{cases}
    A \cup \{p_i\} \setminus \{x_i\} &\text{ if } \exists \text{ some } C \in \Ac_i \text{ such that } A \cap C = \{x_i\}, \\
    A &\text{ otherwise.}
    \end{cases}
\end{align*}

\begin{claim}
$\Ac_i\subseteq\Ic^{(r)}(K_n^*)$ for all $0\leq i \leq n.$
\end{claim}

\begin{claimproof}
\normalfont
It suffices to show that if $A\in \Ac_i$ is independent, then $\varphi_i(A)\in\Ac_{i-1}$ has size $r$ and is also independent. This is clear if $\varphi_i(A)=A.$

Otherwise $\varphi_i(A)\backslash A=\{p_i\}$. Since $N(p_i)=\{x_i\}$ and $x_i\not\in \varphi_i(A)$ then $\varphi_i(A)$ is independent. By definition of $\varphi_i$, $\varphi_i(A)\neq A$ implies $x_i\in A$. Furthermore $\{p_{i},x_{i}\}\nsubseteq A$ as $A$ is independent, thus $\varphi_{i}(A)$ has size $r$ and $\Ac_{i-1}\subseteq \Ic^{(r)}(K_n^*)$.
\end{claimproof}

\begin{claim}
$\Ac_i$ is an intersecting family for all $0\leq i\leq n$.
\end{claim}

\begin{claimproof}
\normalfont
As in the previous claim, we show that if $\Ac_i$ is an intersecting family, then $\Ac_{i-1}=\varphi_i(\Ac_i)$ is intersecting. Consider $A,C\in \Ac_i$, if $\varphi_i(A)=A$ and $\varphi_i(C)=C$ then $\varphi_i(A)\cap\varphi_i(C)\neq\emptyset$ since $\Ac_i$ is intersecting. Thus we may further suppose $\varphi_i(A)=A\cup\{p_i\}\backslash\{x_i\}$. The intersecting property of $\Ac_i$ implies $A\cap C=\{x_i\}$ or there is some $v\in A\cap C$ where $v\neq x_i$. In the former case, $\varphi_i(C)=C\cup\{p_i\}\backslash\{x_i\}$ hence $\varphi_i(A)\cap\varphi_i(C)=\{p_i\}$. Otherwise, since $v\neq x_i$ $v\in \varphi_i(A)\cap \varphi_i(C)$.
\end{claimproof}

\begin{claim}
$\varphi_i$ is injective for all $0\leq i\leq n$.
\end{claim}

\begin{claimproof}
\normalfont
Consider unique sets $A,C \in \Ac_i$ and suppose for the sake of contradiction $\varphi_i(A)=\varphi_i(C)$. Without loss of generality, assume $\varphi_i(A)=A\cup\{p_i\}\backslash\{x_i\}$ and $\varphi_i(C)=C$. Since $\varphi_i(A)\neq A$, there exists some $D\in \Ac_i$ such that $A\cap D=\{x_i\}$. Furthermore, $\varphi_i(A)=\varphi_i(C)=C$ implies $C=A\cup\{p_i\}\backslash\{x_i\}$. However $\Ac_i$ is intersecting, so $D\cap C=\{p_i\}$ giving $\{x_i,p_i\}\subseteq D$. This contradicts the independence of $D$, therefore $\varphi_i$ is injective.
\end{claimproof}

Consider $\Bc:=\Ac_0$, Claims 1-2 give that $\Bc$ is an intersecting subfamily of $\Ic^{(r)}(K_n^*)$. Furthermore, notice that $\Bc=\varphi(\Ac)$ where
\[\varphi=\varphi_1 \circ \varphi_2 \circ \dots \circ \varphi_n.\]
Since each $\varphi_i$ is injective by Claim 3, $\varphi$ is injective. Thus $|\Ac|=|\Bc|$.

Lastly, consider $B_1,B_2\in \Bc$. We  show $B_1\cap B_2\neq\{x_i\}$ for any $1\leq i\leq n$. If $x_i\in B_1\cap B_2$ for some $i$, then $B_1\backslash\{p_j\}\cup\{x_j\}$ and $B_2\backslash\{p_j\}\cup\{x_j\}$ are not independent for any $j\neq i$. Thus $(\varphi_1\circ\cdots\circ\varphi_{i-1})^{-1}(B_1)=B_1$ and $(\varphi_1\circ\cdots\circ\varphi_{i-1})^{-1}(B_2)=B_2$, so we may write $B_1=\varphi_i(C_1)$ and $B_2=\varphi_i(C_2)$ for some $C_1,C_2\in\Ac_i$. Since $x_i\in B_1\cap B_2$ then $\varphi_i(C_1)=C_1$ and $\varphi_i(C_2)=C_2$ giving $C_1=B_1$ and $C_2=B_2$. Hence $x_i\in B_1\cap B_2$ implies $B_1\cap B_2\neq\{x_i\}$ else $\varphi_i(B_1)\neq B_1$ and $\varphi_i(B_2)\neq B_2$. Therefore if $x_i\in B_1\cap B_2$, then $B_1\cap B_2\neq\{x_i\}$, concluding the proof. 
\end{proof}

\begin{proof}[Proof of Theorem \ref{pkEKR}]
Let $\Ac\subseteq\Ic^{(r)}(K_n^*)$ be an intersecting family and let $\Bc$ be the corresponding intersecting family satisfying the conditions of applying Lemma \ref{pendint} to $\Ac$. Consider the partition $\Bc = \Pc \sqcup \Xc$ where
\[ \Pc := \{B \in \Bc: x_i \notin B \text{ for all } i\}\]
\[ \Xc := \{B \in \Bc: x_i \in B \text{ for some } i \}\subseteq\Ac.\]
Note that $\Pc\subseteq\Ic^{(r)}(K_n^*[\{p_1,\ldots,p_n\}])$ is an intersecting family. Since $K_n^*[\{p_1,\ldots,p_n\}]\cong E_n$, the Erd\H{o}s-Ko-Rado theorem gives $|\Pc|\leq\binom{n-1}{r-1}$ when $n\geq 2r$.

 Recall that an independent set of $K_n^*$ contains at most one vertex of $K_n$, hence each set in the following family of subsets of $\Xc$ are of size $r-1$ and contain only pendants
\[ \Rc := \{ X \setminus \{x_i\} : X \in \Xc,\,x_i\in X \}. \]
Note that each set of $\Xc$ contains $x_i$ for some $i$, thus
\[\Xc\subseteq\{R\cup\{x_j\}\,:\,R\in\Rc, p_j\not\in R\}.\]
Since each $R\in\Rc$ has $n-(r-1)$ choices of $j$ satisfying $p_j\not\in R$ then
\[|\Xc|\leq (n-r+1)|\Rc|.\]

By condition (2) of $\Bc$ in Lemma \ref{pendint}, $\Rc\subseteq\Ic^{(r-1)}(K_n^*[\{p_1,\ldots,p_n\}])$ is intersecting. By the Erd\H{o}s-Ko-Rado theorem, $\Rc$ has size at most $\binom{n-1}{r-2}$ when $n \geq 2(r-1)$. This inequality is strict when $n>2(r-1)$.

It follows that when $n \geq 2r$,
\begin{align*}|\Ac|  &= |\Bc|\\
&= |\Pc| + |\Xc|\\
&\leq |\Pc| + (n-r+1)|\Rc|\\
&\leq \binom{n-1}{r-1} + (n-r+1)\binom{n-1}{r-2}\\
&= r\binom{n-1}{r-1}.
\end{align*}

Therefore $|\Ac|\leq r\binom{n-1}{r-1}$ and Lemma \ref{cliqstars} implies that $K_n^*$ is $r$-EKR when $n\geq 2r$.

Furthermore, the Erd\H{o}s-Ko-Rado Theorem implies this inequality is strict when $n>2r$ unless all of the following statements hold:

\vspace{0.2cm}
\begin{compactenum}[\hspace{0.25cm} 1. ]
    \item $\Pc$ is the set of all $r$-sets of $\{p_1,\ldots,p_n\}$ containing some fixed vertex $p_i$.
    \item $\Xc=\{R\cup\{x_k\}\,:\,R\in\Rc,p_k\not\in R\}$.
    \item $\Rc$ is the set of all $(r-1)$-sets of $\{p_1,\ldots,p_n\}$ containing some fixed vertex $p_j$.
\end{compactenum}
\vspace{0.2cm}
To prove $K_n^*$ is strictly $r$-EKR when $n>2r$, it suffices to show that these conditions imply $p_i=p_j$. Suppose $p_i\neq p_j$. Since $n>2r,$ there exist $2r-2$ distinct pendant vertices
\[\{p_{i_1},\ldots,p_{i_{r-1}},p_{j_1},\ldots,p_{j_{r-1}}\}\subseteq\{p_1,\ldots,p_n\}\backslash\{p_i,p_j\}.\]
Consider the following sets
\[P=\{p_i,p_{i_1},\ldots,p_{i_{r-1}}\}\textrm{ and } X=\{p_j,p_{j_1},\ldots,p_{j_{r-2}},x_{j_{r-1}}\}.\]
From (1), we have $P\in\Pc$ and by (2) and (3), $X\in\Xc\subseteq\Ac$. Since $\Ac$ is intersecting, $X\in\Ac$, and $P\cap X=\emptyset$, then $P\not\in \Ac$. From Lemma 3, this implies either $P\cup\{x_i\}\backslash\{p_i\}\in\Ac$ or $P\cup\{x_{i_k}\}\backslash\{p_{i_k}\}\in\Ac$ for some $1\leq k\leq r-1$. But $x_{j_{r-1}}\not\in\{x_i,x_{i_1},\ldots,x_{i_{r-1}}\}$, thus both cases contradict the intersecting property of $\Ac$. Therefore $p_i=p_j$, hence $K_n^*$ is strictly $r$-EKR when $n>2r$.
\end{proof}

\section{Pendant Path Graphs}\label{sec:results}

Define $P_n$ to be the path graph on $n$ vertices whose edges are of the form $x_ix_{i+1}$ with $1\leq i\leq n-1$. As is standard in many EKR-type results, we establish a recurrence relation for $r$-stars of $P_n^*$. However, the recurrence only holds for $r$-stars whose centre is a pendant vertex.

\begin{lemma}
\label{bigrecur}
The recurrence relation
\begin{align}|\Ic_{p_i}^{(r)}(P_n^*)| = |\Ic_{p_i}^{(r)}(P_{n-1}^*)| + |\Ic_{p_i}^{(r-1)}(P_{n-1}^*)| + |\Ic_{p_i}^{(r-1)}(P_{n-2}^*)| + |\Ic_{p_i}^{(r-2)}(P_{n-2}^*)|\end{align}\label{bigrecrelation}
holds for all $n,r\geq 1$ and $i \leq n-2$.
\end{lemma}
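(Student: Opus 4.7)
The plan is to partition $\Ic_{p_i}^{(r)}(P_n^*)$ according to how each member $S$ interacts with the three rightmost vertices $x_n$, $p_n$, $p_{n-1}$. Since $x_np_n \in E(P_n^*)$, an independent set contains at most one of $x_n, p_n$, which yields three disjoint cases: (a) $x_n, p_n \notin S$; (b) $p_n \in S$ (forcing $x_n \notin S$); (c) $x_n \in S$ (forcing $x_{n-1}, p_n \notin S$). In case (c), I further split on whether $p_{n-1} \in S$, because once $x_{n-1}$ is excluded, $p_{n-1}$ has no remaining neighbors and can be freely included or not.

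For each case I aim to exhibit a natural bijection with an $r'$-star of a smaller pendant path graph. In case (a), $S \subseteq V(P_{n-1}^*)$ and the induced subgraph on these vertices is exactly $P_{n-1}^*$, contributing the term $|\Ic_{p_i}^{(r)}(P_{n-1}^*)|$. In case (b), the map $S \mapsto S \setminus \{p_n\}$ is a bijection onto $\Ic_{p_i}^{(r-1)}(P_{n-1}^*)$, with inverse adjoining $p_n$ back in. In case (c) with $p_{n-1} \notin S$, the map $S \mapsto S \setminus \{x_n\}$ lands in $\Ic_{p_i}^{(r-1)}(P_{n-2}^*)$; in case (c) with $p_{n-1} \in S$, the map $S \mapsto S \setminus \{x_n, p_{n-1}\}$ lands in $\Ic_{p_i}^{(r-2)}(P_{n-2}^*)$. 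Summing the four counts yields the claimed recurrence.

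The hypothesis $i \leq n-2$ is used throughout to guarantee that $p_i$ is never among the removed vertices $x_n, p_n, p_{n-1}$, so each smaller family is well-defined and centred at $p_i$. I do not anticipate a major obstacle: the required verifications amount to observing that $N[p_n] = \{x_n, p_n\}$ and $N[x_n] = \{x_{n-1}, x_n, p_n\}$, so that deleting these closed neighborhoods produces exactly $P_{n-1}^*$ and $P_{n-2}^* \sqcup \{p_{n-1}\}$ respectively, together with checking that each inverse map preserves independence because each adjoined vertex has its entire neighborhood already excluded from the smaller set.
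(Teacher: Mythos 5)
Your proposal is correct and follows essentially the same route as the paper's proof: the same four-way partition according to membership of $p_n$, $x_n$, and $p_{n-1}$, with the same deletion bijections onto stars of $P_{n-1}^*$ and $P_{n-2}^*$. The only cosmetic difference is that you phrase the correspondences explicitly as bijections with inverses, whereas the paper states the resulting families as equalities directly.
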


\begin{proof}[Proof of Lemma 1]
Let
\begin{align*}
    \Pc &:= \{ S\backslash\{p_n\} : S\in \Ic_{p_i}^{(r)}(P_n^*),\,p_n \in S \}, \\
    \Xc &:= \{ S\backslash\{x_n\} : S\in \Ic_{p_i}^{(r)}(P_n^*), \,x_n \in S, p_{n-1}\not\in S\},\\
    \Bc &:= \{ S\backslash\{x_n,p_{n-1}\} : S\in \Ic_{p_i}^{(r)}(P_n^*), \,x_n,p_{n-1} \in S\},
    \text{and} \\
    \Ec &:= \{ S \in \Ic_{p_i}^{(r)}(P_n^*) : x_n, p_n \notin S\}.
\end{align*}
Note that $i\leq n-2$ implies $p_i\in V(P_{n-1}^*)$ and $p_i\in V(P_{n-2}^*)$. Moreover, an independent set $S$ of $P_n^*$ satisfies $|S\cap\{x_j,p_j\}|\leq 1$ for all $j$. This gives $\Pc=\Ic^{(r-1)}_{p_i}(P_{n-1}^*)$, $\Bc=\Ic^{(r-2)}_{p_i}(P_{n-2}^*)$, and $\Ec=\Ic_{p_i}^{(r)}(P_{n-1}^*)$. Since $x_n\in S$ implies $x_{n-1}\not\in S$, we have $\Xc=\Ic_{p_i}^{(r-1)}(P_{n-2}^*)$. Therefore
\begin{align*}
|\Ic_{p_i}^{(r)}(P_n^*)| &= |\Ec| + |\Pc| + |\Xc| + |\Bc| \\&=|\Ic_{p_i}^{(r)}(P_{n-1}^*)| + |\Ic_{p_i}^{(r-1)}(P_{n-1}^*)| + |\Ic_{p_i}^{(r-1)}(P_{n-2}^*)| + |\Ic_{p_i}^{(r-2)}(P_{n-2}^*)|,
\end{align*}
as claimed.
\end{proof}

We can extend this recurrence relation to $p_{n-1}$ and $p_n$ by graph symmetry. 

\begin{remark}
\label{symmetry}
When $n \geq 4$ and $1 \leq i, r \leq n$, the following recurrences hold by symmetry:
\[ |\Ic_{p_{n}}^{(r)}(P_n^*)| = |\Ic_{p_1}^{(r)}(P_{n-1}^*)| + |\Ic_{p_1}^{(r-1)}(P_{n-1}^*)| + |\Ic_{p_1}^{(r-1)}(P_{n-2}^*)| + |\Ic_{p_1}^{(r-2)}(P_{n-2}^*)|\]
and
\[ |\Ic_{p_{n-1}}^{(r)}(P_n^*)| = |\Ic_{p_2}^{(r)}(P_{n-1}^*)| + |\Ic_{p_2}^{(r-1)}(P_{n-1}^*)| + |\Ic_{p_2}^{(r-1)}(P_{n-2}^*)| + |\Ic_{p_2}^{(r-2)}(P_{n-2}^*)|.\]
\end{remark}

By Theorem \ref{escape}, for any pendant graph $G^*$ there exists an $r$-star of maximum size whose centre is a pendant vertex. We include the proof of the special case $G=P_n^*$ here for completeness.

\begin{lemma}
\label{injections}
    For $1 \leq i \leq n$ and $1 \leq r \leq n$, 
        \[|\Ic_{x_i}^{(r)}(P_n^*)| \leq |\Ic_{p_i}^{(r)}(P_n^*)|.\]
\end{lemma}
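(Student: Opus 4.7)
The plan is to exhibit an explicit injection
\[
f : \Ic_{x_i}^{(r)}(P_n^*) \to \Ic_{p_i}^{(r)}(P_n^*), \qquad f(S) = (S \setminus \{x_i\}) \cup \{p_i\},
\]
essentially the simplest instance of the escape-path swap from Theorem \ref{escape}, specialized to the single pendant edge $x_i p_i$.

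First I would observe that the map is well-defined. If $S \in \Ic_{x_i}^{(r)}(P_n^*)$, then $x_i \in S$ and so $p_i \notin S$ (since $x_ip_i \in E(P_n^*)$). Hence $f(S)$ really has $r$ elements and contains $p_i$.

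Second I would check independence of $f(S)$. The only neighbor of the pendant $p_i$ in $P_n^*$ is $x_i$, which has been removed. Any other pair of vertices in $f(S)$ lies inside $S \setminus \{x_i\}$, and is therefore non-adjacent by independence of $S$. Thus $f(S) \in \Ic_{p_i}^{(r)}(P_n^*)$.

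Finally, $f$ is injective because one can recover $S$ from $f(S)$ via the reverse swap $(f(S) \setminus \{p_i\}) \cup \{x_i\}$. This yields $|\Ic_{x_i}^{(r)}(P_n^*)| \leq |\Ic_{p_i}^{(r)}(P_n^*)|$. There is no real obstacle here; the reason we get only an inequality (and not equality) is that $f$ need not be surjective: a set $T \in \Ic_{p_i}^{(r)}(P_n^*)$ that also contains $x_{i-1}$ or $x_{i+1}$ cannot be in the image, since swapping $p_i$ back to $x_i$ would create the edge $x_{i-1}x_i$ or $x_ix_{i+1}$.
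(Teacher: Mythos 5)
Your proof is correct and is essentially the same argument as the paper's: both use the swap $S \mapsto (S \setminus \{x_i\}) \cup \{p_i\}$, justify independence via $N(p_i) = \{x_i\}$, and conclude by injectivity of the swap. Your write-up is in fact slightly more careful than the paper's, which only exhibits the image family and leaves the injectivity implicit.
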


\begin{proof}
    Fix $1\leq i\leq n$ and consider the family
    \[\Pc=\{S\backslash\{x_i\}\cup\{p_i\}\,:\,S\in\Ic_{x_i}^{(r)}(P_n^*)\}.\]
    For $S\in\Ic_{x_i}^{(r)}(P_n^*)$, $x_i\in S$ implies $p_i\not\in S$. Furthermore, $N(p_i)=\{x_i\},$ thus $S\backslash\{x_i\}\cup\{p_i\}$ is independent. Therefore $S\subseteq\Ic_{p_i}^{(r)}(P_n^*)$ which implies the result.
\end{proof}

Next, we characterize the behavior of $|\Ic^{(n)}(P_n^*)|$, the total number of independent $n$-sets of $P_n^*$. Prior to stating this result, we provide the definition of the Fibonacci numbers. These numbers are usually defined with the seed values $0$ and $1$. For the purposes of this paper, however, we define the Fibonacci numbers with a shift in indexing.
\begin{definition}
The Fibonacci sequence is defined by
\[F(n) = F(n-1) + F(n-2) \]
with $F(0)=1$ and $F(1)=2$.
\end{definition}
With this in hand, we present the following result.
\begin{lemma}
\label{fibo}
If $n \geq 0$, then $|\Ic^{(n)}(P_n^*)| = F(n).$
\end{lemma}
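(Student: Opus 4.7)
The plan is to induct on $n$, following the recurrence style already used in Lemma 1. The base cases are immediate: $|\Ic^{(0)}(P_0^*)| = 1 = F(0)$ (the empty set is the only $0$-set, taking $P_0^*$ to be the empty graph), and $|\Ic^{(1)}(P_1^*)| = 2 = F(1)$ since $P_1^*$ consists of the single edge $x_1 p_1$, yielding the two independent $1$-sets $\{x_1\}$ and $\{p_1\}$.

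The key structural observation for the inductive step is this: since $P_n^*$ has $2n$ vertices partitioned into $n$ pendant edges $\{x_i, p_i\}$, any independent set that achieves the size $n$ must contain \emph{exactly one} vertex from each pair $\{x_i, p_i\}$. This is the crux of the argument and is what will force the recurrence to reduce cleanly to Fibonacci (it is the analogue of recognising that independent $n$-sets of $P_n^*$ biject with independent sets of the base path $P_n$).

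With that observation in hand, I would partition $\Ic^{(n)}(P_n^*)$ according to which element of the last pair $\{x_n, p_n\}$ is selected. If $p_n \in S$, then $S \setminus \{p_n\}$ is an independent $(n-1)$-set of $P_{n-1}^*$, and conversely any such set extends uniquely by adjoining $p_n$; this contributes $|\Ic^{(n-1)}(P_{n-1}^*)| = F(n-1)$ by induction. If instead $x_n \in S$, then $x_{n-1} \notin S$ by the path edge, so by the one-per-pair observation we must have $p_{n-1} \in S$; removing $\{x_n, p_{n-1}\}$ then bijects with $\Ic^{(n-2)}(P_{n-2}^*)$, contributing $F(n-2)$ by induction. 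Summing yields
\[
|\Ic^{(n)}(P_n^*)| = F(n-1) + F(n-2) = F(n),
\]
which completes the induction.

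The main obstacle, if any, is noticing the one-vertex-per-pair constraint, without which the case analysis does not match the two-term Fibonacci recurrence and one would be tempted to invoke the four-term recurrence of Lemma 1 directly, introducing unnecessary terms. Once that observation is fixed, the rest is a routine bijective count.
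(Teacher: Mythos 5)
Your proof is correct and follows essentially the same route as the paper's: the same one-vertex-per-pair observation, the same split on whether $p_n$ or $x_n$ lies in the set (forcing $p_{n-1}$ in the latter case), and the same two-term recurrence with the same base cases. No issues.
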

\begin{proof}
A base-pendant pair $x_i$ and $p_i$ cannot both be in an independent set. The pendant graph $P_n^*$ is comprised of exactly $n$ base-pendant pairs, thus any independent set of size $n$ must contain either $x_i$ or $p_i$ for all $i$. Furthermore, since $p_n\not\in S$ exactly when $x_n\in S$, then $x_{n-1}\not\in S$ since $x_nx_{n-1}\in E(P_n^*)$. Hence $p_n\not\in S$ if and only if $x_n,p_{n-1}\in S$, so we can create the following two subfamilies $\Ac$ and $\Bc$ such that $|\Ic^{(r)}(P_n^*)|=|\Ac|+|\Bc|$
\begin{align*}
    \Ac&=\{S\backslash\{p_n\}\,:S\in\Ic^{(r)}(P_n^*),\,p_n\in S\} \text{ and }\\
    \Bc&=\{S\backslash\{x_n,p_{n-1}\}\,:S\in\Ic^{(r)}(P_n^*),\,p_n\not\in S\}
\end{align*}
Note that $A\in\Ic^{(r-1)}(P_{n-1}^*)$ if and only if $A\cup\{p_n\}\in\Ic^{(r)}(P_{n}^*)$ since $N(p_n)\cap V(P_{n-1}^*)=\emptyset$. Similarly, $B\in \Ic^{(r-2)}(P_{n-2}^*)$ if and only if $B\cup\{p_{n-1},x_n\}\in\Ic^{(r)}(P_{n}^*)$ since $N(\{p_{n-1},x_n\})\cap V(P_{n-2}^*)=\emptyset$. Therefore $\Ac=\Ic^{(r-1)}(P_{n-1}^*)$ and $\Bc=\Ic^{(r-2)}(P_{n-2}^*)$ together with $|\Ic^{(r)}(P_n^*)|=|\Ac|+|\Bc|$ results in the recurrence
\[|\Ic^{(n)}(P_n^*)| = |\Ic^{(n-1)}(P_{n-1}^*)| + |\Ic^{(n-2)}(P_{n-2}^*)|.\]

Examining small values of $n$, we note $|\Ic^{(0)}(P_0^*)|=1$ since the empty set is, in all cases, the only independent set of cardinality 0. We also have $|\Ic^{(1)}(P_1^*)| = 2$ as $P_1^*$ contains two vertices and there must consequently be two singleton independent sets.
\end{proof}
\begin{lemma}
\label{countInd}
If $1 \leq k \leq n$, then
\begin{align*}
    |\Ic^{(n)}_{p_k}(P_n^*)| =  F(k-1)F(n-k).
\end{align*}
\end{lemma}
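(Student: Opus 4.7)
The plan is to prove the identity by a direct decomposition argument, reducing the count to two independent instances of the preceding Lemma \ref{fibo}. The crucial structural fact, already used in that lemma, is that any independent $n$-set of $P_n^*$ must select \emph{exactly} one vertex from each base-pendant pair $\{x_i,p_i\}$: such a set cannot select both (they are adjacent via the pendant edge) and since there are $n$ pairs and we need $n$ vertices, it cannot omit both from any pair. Thus an independent $n$-set is specified by a choice $c_i\in\{x_i,p_i\}$ for each $i$, subject only to the constraint that consecutive base vertices are not simultaneously chosen, i.e.\ we never have $c_i=x_i$ and $c_{i+1}=x_{i+1}$.

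With this reformulation, I would fix $k$ and count independent $n$-sets $S$ with $p_k\in S$. Forcing $c_k=p_k$ means $x_k\notin S$, so the adjacency constraints involving $x_k$ (namely $x_{k-1}x_k$ and $x_k x_{k+1}$) are automatically satisfied regardless of the choices $c_{k-1}$ and $c_{k+1}$. Consequently the choices on the ``left block'' $\{1,\dots,k-1\}$ and the ``right block'' $\{k+1,\dots,n\}$ are completely decoupled, and each block's valid configurations are in bijection with independent sets picking one vertex per pair in the induced subgraph, subject only to the non-consecutive constraint on base vertices.

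The induced subgraph on $\{x_1,p_1,\dots,x_{k-1},p_{k-1}\}$ is precisely $P_{k-1}^*$, and its maximum independent sets (of size $k-1$) are exactly the objects counted by $|\Ic^{(k-1)}(P_{k-1}^*)|$. Similarly the induced subgraph on $\{x_{k+1},p_{k+1},\dots,x_n,p_n\}$ is isomorphic to $P_{n-k}^*$ and contributes $|\Ic^{(n-k)}(P_{n-k}^*)|$. By Lemma \ref{fibo}, these counts equal $F(k-1)$ and $F(n-k)$ respectively, so multiplying gives
\[
|\Ic^{(n)}_{p_k}(P_n^*)| = F(k-1)\,F(n-k),
\]
with the boundary cases $k=1$ or $k=n$ handled cleanly by the convention $F(0)=1$ (the empty block has a unique vacuous configuration).

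The only subtlety — and the step I would be most careful about — is verifying the decoupling at the boundaries: one must explicitly note that because $x_k\notin S$ is forced, no joint restriction is imposed linking $c_{k-1}$ with $c_{k+1}$. Once this is stated cleanly, the rest is a direct product argument plus one application of Lemma \ref{fibo}. (An alternative route would be induction on $n$ using the recurrence of Lemma \ref{bigrecur} and the Fibonacci identity $F(k-1)F(n-k)=F(k-1)F(n-k-1)+F(k-1)F(n-k-2)+F(k-2)F(n-k-1)+F(k-2)F(n-k-2)$ after regrouping, but the bijective decomposition is shorter and more transparent.)
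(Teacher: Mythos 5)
Your proposal is correct and follows essentially the same route as the paper: both exploit that an independent $n$-set picks exactly one vertex per base-pendant pair, force $x_k\notin S$, and decompose the count into independent choices on the induced copies of $P_{k-1}^*$ and $P_{n-k}^*$, then apply Lemma \ref{fibo} to each factor. Your explicit attention to the boundary decoupling and the $k=1,n$ edge cases is a nice touch but does not change the argument.
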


\begin{proof}
Fix $1\leq k\leq n$ and consider the following two subgraphs of $P_n^*:$
\begin{align*}
    P_{<k}^*&:=P_n^*[\{x_1,p_1,\ldots,x_{k-1},p_{k-1}\}]\\
    P_{>k}^*&:=P_n^*[\{x_{k+1},p_{k+1},\ldots,x_n,p_n\}].
\end{align*}
 Recall that an independent $n$-set of $P_n^*$ contains exactly one of $x_i$ or $p_i$ for all $i$. For $S\in\Ic^{(r)}_{p_k}(P_n^*)$, this implies $|S\cap P_{<k}^*|=k-1$ and $|S\cap P_{>k}^*|=n-k$.
 
Furthermore, the inclusion of $p_k$ in $S$ implies that $x_k\not\in S$. Thus since the subgraphs $P_{<k}^*$ and $P_{>k}^*$ are only adjacent to $x_k$ in the larger graph $P_n^*$, $S$ can be realized as $S=A\cup B\cup\{p_k\}$ where $A\in\Ic^{(k-1)}(P_{<k}^*)$ and $B\in\Ic^{(n-k)}(P_{>k}^*)$. Therefore
\begin{align}
\label{fiboeq}
    |\Ic^{(n)}_{p_k}(P_n^*)| = |\Ic^{(k-1)}(P_{<k}^*)||\Ic^{(n-k)}(P_{>k}^*)|. 
\end{align}
Finally, notice that $P_{<k}^*$ is isomorphic to $P^*_{k-1}$ and $P_{>k}^*$ is isomorphic to $P^*_{n-k}$. Hence the result follows from applying Lemma \ref{fibo} to Equation \ref{fiboeq}.
\[
    |\Ic^{(n)}_{p_k}(P_n^*)| = F(k-1)F(n-k).
\qedhere
\]
\end{proof}

Now, we identify which value(s) of $k$ maximize the product $F(k-1)F(n-k)$. 
This allows us to identify stars of maximum size when $r=n$.

\begin{lemma}\label{FiboMaximum}
Fix $n\geq 1$ and $1\leq k \leq n$. Define 
\begin{align*}
    f(k) \coloneqq F(k-1)F(n-k).
\end{align*}
The function $f(k)$ is maximized when $k=2$ or $k=n-1$. 
\end{lemma}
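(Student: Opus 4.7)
The plan is to reduce to analyzing first differences $\Delta(k,n) := f(k) - f(k+1)$, derive a closed form via the Fibonacci recurrence, and telescope.

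First I would reduce by symmetry: since $f(k) = F(k-1)F(n-k) = F((n+1-k)-1)F(n-(n+1-k)) = f(n+1-k)$, it suffices to show $f(k) \leq f(2)$ for $1 \leq k \leq \lfloor n/2 \rfloor$, which combined with the symmetry covers all of $[2,n-1]$ except possibly the middle index $k=(n+1)/2$ when $n$ is odd. The boundary $k=1$ is immediate: $f(1) = F(n-1) = F(n-2) + F(n-3) \leq 2F(n-2) = f(2)$.

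For $2 \leq k \leq \lfloor n/2 \rfloor$, I would expand $\Delta(k,n) = F(k-1)F(n-k) - F(k)F(n-k-1)$ using the Fibonacci recurrences $F(n-k) = F(n-k-1) + F(n-k-2)$ and $F(k-1) - F(k) = -F(k-2)$ to obtain
\[\Delta(k,n) = F(k-1)F(n-k-2) - F(k-2)F(n-k-1) = -\Delta(k-1, n-2).\]
Iterating this down to the base case $\Delta(1,m) = F(m-1) - 2F(m-2) = -F(m-4)$ yields the closed form $\Delta(k,n) = (-1)^k F(n-2k-2)$, valid whenever $n - 2k - 2 \geq 0$. Telescoping then gives
\[f(2) - f(k) = \sum_{j=2}^{k-1}\Delta(j,n) = F(n-6) - F(n-8) + F(n-10) - \cdots,\]
an alternating sum of Fibonacci numbers whose indices drop by $2$ at each step.

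The main work is showing this alternating sum is non-negative. My plan is to pair adjacent terms via the relation $F(m) - F(m-2) = F(m-1)$, which collapses each consecutive pair into a single positive Fibonacci number; if the number of terms $k-2$ is odd, the unpaired leftover term (which carries a $+$ sign) is itself a positive Fibonacci number. Either way the sum is $\geq 0$, establishing $f(k) \leq f(2)$ for all $k$ in the direct range.

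I expect the main obstacle to be the middle index $k = (n+1)/2$ when $n$ is odd, where the closed form for $\Delta$ fails since $n - 2k - 2 = -1$. I would handle this by computing $\Delta((n-1)/2, n) = F((n-3)/2)F((n+1)/2) - F((n-1)/2)^2$ directly via Cassini's identity $F(a-1)F(a+1) - F(a)^2 = (-1)^a$, obtaining $\pm 1$, and verifying that this small correction is absorbed by the slack $f(2) - f((n-1)/2) \geq F(1) = 2$ that the pairing argument already provides.
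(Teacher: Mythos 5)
Your argument is correct in outline and takes a genuinely different route from the paper. The paper applies Binet's formula, writes $5f(k)$ as a constant plus an oscillating term $\alpha(k)$, and argues by parity of $k$ together with a derivative computation that $\alpha$ is maximized at $k=2$; your proof stays entirely in the integers, deriving the difference identity $\Delta(k,n)=-\Delta(k-1,n-2)$ from the Fibonacci recurrence, obtaining the closed form $\Delta(k,n)=(-1)^kF(n-2k-2)$, and telescoping to the exact expression $f(2)-f(k)=F(n-6)-F(n-8)+\cdots$, whose nonnegativity follows by pairing via $F(m)-F(m-2)=F(m-1)>0$. This buys an exact formula for the deficit $f(2)-f(k)$ (hence strictness information for free) and avoids the somewhat delicate step of differentiating in the integer variable $k$; the paper's approach, in exchange, makes the asymptotic structure (a dominant term plus a decaying oscillation) transparent. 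You are also right to isolate the middle index $k=(n+1)/2$ for odd $n$ as the genuinely delicate point --- it is exactly where $f$ can tick back up (e.g.\ $n=7$: $f(3)=24<f(4)=25<f(2)=26$) --- and Cassini's identity, giving $\Delta((n-1)/2,n)=(-1)^{(n-1)/2}$, is the right tool there.

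One detail to tighten: the claimed slack $f(2)-f((n-1)/2)\ge 2$ is not always available; for $n=5$ the telescoping sum is empty and the slack is $0$. The fix is to track the sign coming out of Cassini. When $n\equiv 1\pmod 4$ the correction is $\Delta((n-1)/2,n)=+1$, so $f((n+1)/2)=f((n-1)/2)-1\le f(2)$ with no slack needed. When $n\equiv 3\pmod 4$ and $n\ge 7$ the correction is $-1$, but then the telescoping sum for $f(2)-f((n-1)/2)$ has an odd number of terms ending in an unpaired $+F(1)=+2$, so the slack is at least $2$, which absorbs the $+1$. (For $n=3$ the middle index is $k=2$ itself.) With that case split, your argument is complete.
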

\begin{proof}
First, we leverage the symmetry of the product,
\begin{align*}
    f(k) &= F(k-1)F(n-k), \\
    &= F(n-(n-k+1))F((n-k+1)-1), \\
    &= f(n-k+1).
\end{align*}
Setting $k=2$, this implies $f(2)=f(n-1)$. Therefore, it suffices to show that $k=2$ maximizes $F(k-1)F(n-k)$ for $k \leq \lceil n/2 \rceil$. It is well-known that the Fibonacci numbers can be written in closed form as 
\begin{align*}
    F(n-2) &= \frac{\varphi^{n} - (-\varphi)^{-n}}{\sqrt{5}},
\end{align*}
where $\varphi = \frac{1 + \sqrt{5}}{2}$ is the golden ratio \cite{Binet}. The product can then be written as 
\begin{align*}
    F(k-1)F(n-k) &= \frac{\left(\varphi^{k+1} - (-\varphi)^{-k-1}\right)\left(\varphi^{n-k+2} - (-\varphi)^{-n+k-2}\right)}{5}
    \end{align*}
    which equates to
    \begin{align*}
   5F(k-1)F(n-k) &= \varphi^{n+3} +(-1)^{n-k+1} \varphi^{-n+2k-1} +(-1)^{k} \varphi^{n-2k+1} + (-1)^{n+1} \varphi^{-n-3}.
\end{align*}
Since the terms $\varphi^{n+3}$ and $ (-1)^{n+1} \varphi^{-n-3}$ are independent of $k$, in order to maximize the product of $F(k-1)$ and $F(n - k)$ it suffices to maximize
\begin{align*}
    \alpha(k) \coloneqq (-1)^{n-k+1} \varphi^{-n+2k-1} +(-1)^{k} \varphi^{n-2k+1}.
\end{align*}
If $n$ is even, then
\begin{align*}
    \alpha(k) = (-1)^{-k+1} \varphi^{-(n-2k+1)} + (-1)^{k}\varphi^{n-2k+1}.
\end{align*}
\begin{claim}
If $k$ is odd, $\alpha(k)<0$ and if $k$ is even then $\alpha(k)>0$.
\end{claim}
\begin{claimproof}
We further refine our assumptions based on the parity of $k$: 
\begin{align}
\label{alpha}
\alpha(k) = \begin{cases}
-\varphi^{-(n-2k+1)} + \varphi^{n-2k+1} \quad &k \text{ is even}, \\
\varphi^{-(n-2k+1)} - \varphi^{n-2k+1} \quad &k \text{ is odd.}
\end{cases}
\end{align}
Since $1\leq k\leq \lceil n/2\rceil$, we have $1\leq n-2k+1\leq n-1$. Therefore, 
\begin{align}
\label{fiboinv}
\varphi^{-(n-2k+1)} < 1
\end{align} 
and
\begin{align}
\label{fiboinv2}
    \varphi^{n-2k+1} > 1.
\end{align}
Applying Equations \eqref{fiboinv} and \eqref{fiboinv2} to Equation \eqref{alpha} for odd $k$ implies
\begin{align*}
\alpha(k) &< 1 - \varphi^{n-2k+1} < 0.
\end{align*}
Similarly, we get a lower bound for even $k$,
\begin{align*}
\alpha(k) &> -1 + \varphi^{n-2k+1} > 0. 
\end{align*}
So we conclude $\alpha(k)$ is positive for even $k$ and negative for odd $k$.
\end{claimproof}
In order to maximize $\alpha(k)$ it is sufficient to consider even $k$. The derivative of $\alpha(k)$ for even $k$ is
\begin{align*}
\frac{d \alpha}{dk} = -2\ln(\varphi) \varphi^{-(n-2k+1)} - 2\ln(\varphi) \varphi^{n-2k+1}.
\end{align*}
Noting that $\ln(\varphi) > 0$ yields
\begin{align*}
\frac{d \alpha}{dk} &< 0.
\end{align*}
Therefore, $\alpha(k)$ is a strictly decreasing function, hence it is maximized by the minimum value of $k$, which is $k=2$.

\ \\
Finally, we consider the case where $n$ is odd. As in the $n$ is even case, we again refine $\alpha(k)$ based on the parity of $k$: 
\begin{align*}
\alpha(k) = \begin{cases} 
\varphi^{-(n-2k+1)} + \varphi^{n-2k+1} \quad & k \text{ is even}, \\
-\varphi^{-(n-2k+1)} - \varphi^{n-2k+1} \quad & k \text{ is odd}.
\end{cases}
\end{align*}
It is clear that $\alpha(k)>0$ when $k$ is even, while $\alpha(k)<0$ when $k$ is odd. Hence, we consider only even $k$ and compute the derivative:
\begin{align*}
\frac{d \alpha}{dk} = 2\ln(\varphi) \varphi^{-(n-2k+1)} - 2\ln(\varphi) \varphi^{n-2k+1}.
\end{align*}
Applying Equations \eqref{fiboinv}, \eqref{fiboinv2} and that $\ln(\varphi)>0$, we obtain
\begin{align*}
\frac{d \alpha}{dk} &< 2\ln(\varphi) - 2\ln(\varphi) = 0.
\end{align*}
Since $\alpha(k)$ is a strictly decreasing function, it is maximized when $k$ is minimum. Thus $\alpha(k)$ and hence $f(k)$ is maximized when $k=2$ and, by symmetry, when $k=n-1$. 
\end{proof}

Applying Lemmas \ref{countInd} and \ref{FiboMaximum} gives us the following corollary.

\begin{cor}
\label{cor:nmax}
$|\Ic^{(n)}_{p_k}(P_n^*)|$ is maximized when $k=2$ or $k=n-1$.
\end{cor}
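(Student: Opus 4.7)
The statement to be proved is essentially an immediate consequence of the two preceding lemmas, so the plan is short. First I would invoke Lemma \ref{countInd}, which identifies $|\Ic^{(n)}_{p_k}(P_n^*)|$ with the product $F(k-1)F(n-k)$ of shifted Fibonacci numbers. This converts the combinatorial optimization over pendant centres into the purely numerical question of maximizing the function $f(k) = F(k-1)F(n-k)$ over $1 \leq k \leq n$.

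Then I would apply Lemma \ref{FiboMaximum} directly: that lemma asserts precisely that $f(k)$ attains its maximum at $k = 2$ and, by the symmetry $f(k) = f(n-k+1)$ exploited in its proof, also at $k = n-1$. Composing these two facts yields that $|\Ic^{(n)}_{p_k}(P_n^*)|$ is maximized at $k = 2$ or $k = n-1$, which is the content of the corollary.

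Since the work has already been done in the two lemmas, there is no genuine obstacle; the proof is a one-line citation. The only stylistic choice is whether to record the value of the maximum explicitly, writing
\[
\max_{1 \leq k \leq n} |\Ic^{(n)}_{p_k}(P_n^*)| \;=\; F(1)\,F(n-2) \;=\; 2\,F(n-2),
\]
or simply to state that the maximum is attained at $k=2$ and $k=n-1$. I would include the explicit value for clarity, since it is immediate from $F(1)=2$ and connects the result back to the Fibonacci recurrence highlighted in the introduction. No new ideas beyond chaining the two lemmas are needed.
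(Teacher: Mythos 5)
Your proposal is correct and matches the paper exactly: the corollary is obtained by chaining Lemma \ref{countInd} (which gives $|\Ic^{(n)}_{p_k}(P_n^*)| = F(k-1)F(n-k)$) with Lemma \ref{FiboMaximum} (which locates the maximum of that product at $k=2$ and $k=n-1$). Recording the explicit value $2F(n-2)$ is a harmless addition that the paper itself uses later in Lemma \ref{notNEKR}.
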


Next we are ready to prove our second main result that $\Ic_{p_2}^{(r)}(P_n^*)$ and $\Ic_{p_{n-1}}^{(r)}(P_n^*)$ are each of maximum size for all $n$ and $r\leq n$.

\begin{proof}[Proof of Theorem \ref{thetheorem}]
By Lemma \ref{injections} it suffices to show $|\Ic^{(r)}_{p_i}(P_n^*)|\leq|\Ic^{(r)}_{p_2}(P_n^*)|$ for all $i$. We proceed by induction on $r$ and begin by establishing two base cases.
When $r=1$, it is clear that for all $i$, $|\Ic_{p_i}^{(1)}(P_n^*)|= 1$. For $r=2$, all independent $r$-sets containing $p_i$ are of the form $\{p_i,p_j\}$ or $\{p_i,x_j\}$ for $i\neq j$.
Thus, $|\Ic^{(2)}_{p_i}(P_n^*)| = 2n-2$ for all $i$. Thus Equation \eqref{fiboeq} holds for $r=1$ and $r=2$.

Fix $i$ and $3\leq k\leq n$ and assume that for all $r<k$,
$|\Ic_{p_i}^{(r)}(P_n^*)|\leq |\Ic_{p_2}^{(r)}(P_n^*)|$. Further induct on $n\geq k$. By Corollary \ref{cor:nmax}, we have $|\Ic_{p_i}^{(k)}(P_k^*)| \leq |\Ic_{p_2}^{(k)}(P_k^*)|$. Now fix $l$ and suppose that for all $r$, $j$ such that $2\leq r< k \leq j <l$, $|\Ic_{p_i}^{(r)}(P_{j}^*)| \leq |\Ic_{p_2}^{(r)}(P_{j}^*)|$ holds.
Taking $r=k-1$ and $j=l-1$ we have $|\Ic_{p_i}^{(k-1)}(P_{l-1}^*)| \leq |\Ic_{p_2}^{(k-1)}(P_{l-1}^*)|$, hence
\begin{align*}|\Ic_{p_i}^{(k-1)}(P_{l-1}^*)| + |\Ic_{p_i}^{(k)}(P_{l-1}^*)| \\\leq |\Ic_{p_2}^{(k-1)}(P_{l-1}^*)| + |\Ic_{p_2}^{(k)}(P_{l-1}^*)|.\end{align*}
Similarly, $ |\Ic_{p_i}^{(k-1)}(P_{l-2}^*)| \leq |\Ic_{p_2}^{(k-1)}(P_{l-2}^*)|$ which gives
\[|\Ic_{p_i}^{(k-1)}(P_{l-2}^*)| + |\Ic_{p_i}^{(k-1)}(P_{l-1}^*)| + |\Ic_{p_i}^{(k)}(P_{l-1}^*)| \leq |\Ic_{p_2}^{(k-1)}(P_{l-2}^*)| + |\Ic_{p_2}^{(k-1)}(P_{l-1}^*)| + |\Ic_{p_2}^{(k)}(P_{l-1}^*)|.\]
Applying the inductive hypothesis once more, we have $|\Ic_{p_i}^{(k-2)}(P_{l-2}^*)| \leq |\Ic_{p_2}^{(k-2)}(P_{l-2}^*)|$, hence
\begin{align*}|\Ic_{p_i}^{(k-2)}(P_{l-2}^*)| + |\Ic_{p_i}^{(k-1)}(P_{l-2}^*)| + |\Ic_{p_i}^{(k-1)}(P_{l-1}^*)| + |\Ic_{p_i}^{(k)}(P_{l-1}^*)| \\\leq |\Ic_{p_2}^{(k-2)}(P_{l-2}^*)| + |\Ic_{p_2}^{(k-1)}(P_{l-2}^*)| + |\Ic_{p_2}^{(k-1)}(P_{l-1}^*)| + |\Ic_{p_2}^{(k)}(P_{l-1}^*)|.\end{align*}
Since $k \geq 3$ and $l > k$, by Equation \eqref{bigrecrelation} we have that
\[|\Ic_{p_i}^{(k)}(P_{l}^*)| \leq |\Ic_{p_2}^{(k)}(P_l^*)|.\]
\end{proof}

Now that we have identified the maximum $r$-stars of a pendant path graph, we look to investigate for which value(s) of $r$ is $P_n^*$ $r$-EKR. In doing so, we find that when $n\geq 4$ $P^*_n$ is not $n$-EKR.

\begin{lemma}
\label{notNEKR}
For all $n\geq 4$, $P^*_n$ is not $n$-EKR.
\end{lemma}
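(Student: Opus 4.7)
The plan is to exhibit a single large intersecting family and compare its size to the maximum $n$-star. Every independent $n$-set of $P_n^*$ must contain exactly one of $x_i, p_i$ for each $i$ (by a counting argument identical to the proof of Lemma \ref{fibo}), so each independent $n$-set corresponds to a choice vector $c \in \{x,p\}^n$ subject to the constraint that no two consecutive coordinates both equal $x$. Two independent $n$-sets $S$ and $S'$ are vertex-disjoint if and only if their choice vectors are componentwise complementary.

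The first step is to characterize when both a choice vector and its complement are valid. For the complement to also avoid two consecutive $x$'s, the original vector must avoid two consecutive $p$'s; combined with the original constraint, the vector must strictly alternate between $x$ and $p$. Hence there are exactly two such alternating vectors $S_1 = \{x_1,p_2,x_3,\ldots\}$ and $S_2 = \{p_1,x_2,p_3,\ldots\}$, and they form the unique pair of disjoint independent $n$-sets.

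Once that uniqueness is established, the intersecting family $\Ac := \Ic^{(n)}(P_n^*) \setminus \{S_1\}$ has size $F(n) - 1$ by Lemma \ref{fibo}. By Theorem \ref{thetheorem} and Corollary \ref{cor:nmax}, the maximum $n$-star of $P_n^*$ has size $|\Ic^{(n)}_{p_2}(P_n^*)| = F(1)F(n-2) = 2F(n-2)$. The inequality
\[ F(n) - 1 > 2F(n-2) \]
is equivalent, via $F(n)=F(n-1)+F(n-2)$ and $F(n-1)=F(n-2)+F(n-3)$, to $F(n-3) > 1$, which holds exactly when $n \geq 4$ under the indexing of Definition 2.1. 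Therefore $\Ac$ strictly beats every $n$-star, and $P_n^*$ is not $n$-EKR.

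The main obstacle is not the size comparison (which reduces to an elementary Fibonacci identity) but rather the careful verification that the alternating pair is the \emph{only} disjoint pair among independent $n$-sets; once this is clear, removing one vertex of the pair automatically yields an intersecting family, and no further case analysis is needed.
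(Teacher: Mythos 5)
Your proposal is correct and constructs the same extremal family as the paper --- the full family $\Ic^{(n)}(P_n^*)$ with one of the two strictly alternating $n$-sets deleted --- and it closes with the same comparison $F(n)-1 > 2F(n-2)$, which as you note reduces via $F(n)=2F(n-2)+F(n-3)$ to $F(n-3)>1$, i.e.\ $n\geq 4$. The one genuine difference is how the intersecting property of the pruned family is verified. The paper removes the complement $A^C$ of the alternating set $A$ and argues that every remaining member other than $A$ contains strictly more pendant vertices than $A$, so any two such members share a pendant by pigeonhole; you instead observe that two independent $n$-sets are disjoint exactly when their choice vectors in $\{x,p\}^n$ are componentwise complementary, and that a vector and its complement are both admissible only if the vector strictly alternates, so the alternating pair is the \emph{unique} disjoint pair. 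Your route is slightly more robust: the paper's pendant-count claim is not literally true for even $n$ (for $n=4$ the set $\{x_1,p_2,p_3,x_4\}$ has exactly as many pendants as $A$), and one needs your disjointness characterization, or something equivalent, to settle that case anyway. Nothing essential is missing from your argument; the only cosmetic addition would be to cite Lemma \ref{injections} alongside Corollary \ref{cor:nmax} to make fully explicit that $2F(n-2)$ also bounds the $n$-stars centred at base vertices.
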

\begin{proof} 
Recall that an independent $n$-set $S \in \Ic^{(r)}(P_n^*)$ contains exactly one of either $p_i$ or $x_i$ for all $i$. 
Let $A \in \Ic^{(n)}(P_n^*)$ be the independent set that contains $x_i$ if $i$ is odd, and $p_i$ if $i$ is even. Then the complement of $A$, denoted $A^{C}$, is the independent set containing $x_i$ if $i$ is even, and $p_i$ if $i$ is odd. Any intersecting independent family of $n$-sets cannot contain both $A$ and $A^C$, so the largest such family has size at most $|\Ic^{(n)}(P_n^*)|-1$. We show that this bound is tight.

Define the family of independent $r$-sets $\Bc = \Ic^{(n)}(P_n^*) \setminus \{A^{C}\}$. We claim $\Bc$ is intersecting. 
Note that any independent set $S \in \Bc\backslash \{A\}$ must contain more pendant vertices than that of $A$. Thus, any two sets in $\Bc\backslash\{A\}$ contain at least $n+1$ pendant vertices between them. Since there are exactly $n$ pendant vertices, the two sets must intersect by the Pigeonhole Principle. Moreover, since $A^C$ is the only independent $r$-set that does not intersect $A$ and $A^C\not\in\Bc$ then $\Bc$ is intersecting. By Lemma \ref{fibo}, this implies that the largest set of intersecting independent $n$-sets has size $F(n)-1$.  

Finally, by Lemma \ref{countInd} we have $\Ic^{(n)}_{p_2}(P_n^*) = 2F(n-2)$. 
Since $F(n)-1 > 2F(n-2)$ when $n\geq 4$, we conclude that the pendant path graph $P^*_n$ is not $n$-EKR.
\end{proof}

\section{Future work}\label{sec:futurework}

It is important to note that although the pendant path graph is similar in many ways to the ladder graph, the ladder graph has been proven to be $n-$EKR in \cite{LadderGraph} while we see here that the pendant path graph is not. However we do still believe that Holroyd and Talbot's conjecture holds for pendant path graphs, which we conjecture here.

\begin{conjecture}
The pendant path graph $P^*_n$ is $r$-EKR for $n \geq 2r$. 
\end{conjecture}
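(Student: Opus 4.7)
My plan is to combine the compression method used in the proof of Theorem \ref{pkEKR} with an induction on $n$ driven by the recurrence of Lemma \ref{bigrecur}. The key observation is that Lemma \ref{pendint} depends only on the local neighborhood structure around each $x_i$--$p_i$ pair, so it carries over verbatim to any pendant graph: given an intersecting family $\Ac \subseteq \Ic^{(r)}(P_n^*)$, there is an intersecting family $\Bc$ of the same size in which no pair $B_1, B_2 \in \Bc$ satisfies $B_1 \cap B_2 \subseteq V(P_n)$, so every intersection contains at least one pendant.

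After this compression, I would partition $\Bc = \Pc \sqcup \Xc$ with $\Pc$ the pendant-only sets and $\Xc$ the sets containing at least one base vertex. Since the pendants form an independent set isomorphic to $E_n$, the classical Erd\H{o}s-Ko-Rado theorem yields $|\Pc| \leq \binom{n-1}{r-1}$ whenever $n \geq 2r$. To bound $|\Xc|$ I would induct on $n$ with $r$ fixed, decomposing the whole of $\Bc$ according to its intersection with the last rung $\{x_n, p_n, x_{n-1}, p_{n-1}\}$ exactly mirroring the four-way partition in the proof of Lemma \ref{bigrecur}: sets containing $p_n$, sets containing $x_n$ but not $p_{n-1}$, sets containing both $x_n$ and $p_{n-1}$, and sets avoiding $\{x_n,p_n\}$ altogether. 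Each piece collapses to a family of smaller $r'$-sets in $P_{n-1}^*$ or $P_{n-2}^*$, and the cross-intersecting constraints inherited from $\Ac$ between different pieces would allow the inductive hypothesis applied at the centre $p_2$ to sum to the target bound $|\Ic_{p_2}^{(r)}(P_n^*)|$ via Lemma \ref{bigrecur}.

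The principal obstacle is the bound on $|\Xc|$: unlike $K_n^*$, where every independent set contains at most one base vertex (making the counting $|\Xc| \leq (n-r+1)|\Rc|$ in the proof of Theorem \ref{pkEKR} essentially immediate), an independent set in $P_n^*$ may contain up to $\lceil n/2 \rceil$ pairwise non-adjacent base vertices, so the base-vertex accounting is much subtler. The cross-intersecting bounds needed for the inductive decomposition are themselves statements about cross-intersecting EKR for smaller pendant paths and, as such, would likely require their own inductive proof, probably via a further compression or a direct analysis of extremal configurations. A secondary obstacle is the tight base case $n = 2r$, where no slack remains in the EKR bound on $\Pc$ and one likely needs a delicate extremal-uniqueness argument akin to the closing paragraph of the proof of Theorem \ref{pkEKR} to rule out non-star maximizers.
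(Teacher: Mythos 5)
This statement is an open conjecture in the paper's concluding section; the paper offers no proof of it (and indeed remarks that ``the standard compression method seems to fail for the pendant path graph''), so what you have written can only be judged as a proof sketch, and as such it has a fatal gap at its very first step. You assert that Lemma \ref{pendint} ``carries over verbatim to any pendant graph'' because it depends only on the local structure of each $x_i$--$p_i$ pair. That is not true. The individual compressions $\varphi_i$ and Claims 1--3 do generalize, but the conclusion you actually need --- condition (2), that no two sets of the compressed family $\Bc$ intersect only in base vertices --- relies essentially on the fact that an independent set of $K_n^*$ contains \emph{at most one} vertex of $K_n$. First, $\varphi_i$ only fires when $A\cap C=\{x_i\}$ exactly; in $P_n^*$ two sets may satisfy $A\cap C=\{x_1,x_3\}\subseteq V(P_n)$, in which case no $\varphi_i$ acts on them and the compressed family still violates condition (2). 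Second, the final paragraph of the proof of Lemma \ref{pendint} argues that $(\varphi_1\circ\cdots\circ\varphi_{i-1})^{-1}(B_1)=B_1$ because $B_1\setminus\{p_j\}\cup\{x_j\}$ cannot be independent when $x_i\in B_1$; in $P_n^*$ that set \emph{is} independent whenever $x_j$ is not adjacent to any base vertex of $B_1$, so the stability argument collapses. Without condition (2) you lose the Erd\H{o}s--Ko--Rado bound on the family $\Rc$ of pendant residues, which is the engine of the whole proof of Theorem \ref{pkEKR}.

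Beyond that, the two obstacles you name yourself are precisely the content of the conjecture, not technicalities: bounding $|\Xc|$ when independent sets may contain up to $\lceil n/2\rceil$ base vertices, and establishing the cross-intersecting inequalities between the four pieces of the Lemma \ref{bigrecur} decomposition, are unproved statements of essentially the same difficulty as the conjecture itself. A correct write-up would need either a genuinely new compression adapted to multiple base vertices (or the cycle method the authors point to), together with a proof that the target star $\Ic^{(r)}_{p_2}(P_n^*)$ --- identified as extremal in Theorem \ref{thetheorem} --- dominates after the decomposition. As it stands, the proposal identifies the right target and the right recurrence but proves nothing new.
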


Although the standard compression method seems to fail for the pendant path graph, another technique may be suitable such as the cycle method of \cite{cycle_method}. Furthermore, beyond the pendant path graph there is certainly much left to investigate in regards to the Erd\H{o}s-Ko-Rado property of pendant graphs.

\bibliographystyle{plain}
\bibliography{Bibliography.bib}
\end{document}